\numberwithin{equation}{section}
\theoremstyle{plain}
\newtheorem{theorem}{Theorem}[section]
\newtheorem{lemma}[theorem]{Lemma}
\newtheorem{proposition}[theorem]{Proposition}
\newtheorem{definition}[theorem]{Definition}
\theoremstyle{remark}
\begin{document}


\title[Some new functionals]{Some new functionals related to free boundary minimal submanifolds}

\author{Tianyu Ma, Vladimir Medvedev}

\address{Faculty of Mathematics, National Research University Higher School of Economics, 6 Usacheva Street, Moscow, 119048, Russian Federation}

\email{tma@hse.ru, vomedvedev@hse.ru}



\begin{abstract}
The metrics induced on free boundary minimal surfaces in geodesic balls in the upper unit hemisphere and hyperbolic space can be characterized as critical metrics for the functionals $\Theta_{r,i}$ and $\Omega_{r,i}$, introduced recently by Lima, Menezes and the second author. In this paper, we generalize this characterization to free boundary minimal submanifolds of higher dimension in the same spaces. We also introduce some functionals of the form different from $\Theta_{r,i}$, and show that the critical metrics for them are the metrics induced by free boundary minimal immersions into a geodesic ball in the upper unit hemisphere. In the case of surfaces, these functionals are bounded from above and not bounded from below. Moreover, the canonical metric on a geodesic disk in a 3-ball in the upper unit hemisphere is maximal for this functional on the set of all Riemannian metric of the topological disk.
 \end{abstract}

\maketitle


\newcommand\cont{\operatorname{cont}}
\newcommand\diff{\operatorname{diff}}

\newcommand{\dvol}{\text{dA}}
\newcommand{\Ric}{\operatorname{Ric}}
\newcommand{\Hess}{\operatorname{Hess}}
\newcommand{\GL}{\operatorname{GL}}
\newcommand{\myO}{\operatorname{O}}
\newcommand{\myP}{\operatorname{P}}
\newcommand{\eye}{\operatorname{Id}}
\newcommand{\myF}{\operatorname{F}}
\newcommand{\Vol}{\operatorname{Vol}}
\newcommand{\odd}{\operatorname{odd}}
\newcommand{\even}{\operatorname{even}}
\newcommand{\ol}{\overline}
\newcommand{\mye}{\operatorname{E}}
\newcommand{\myo}{\operatorname{o}}
\newcommand{\myt}{\operatorname{t}}
\newcommand{\irr}{\operatorname{Irr}}
\newcommand{\mydiv}{\operatorname{div}}
\newcommand{\curl}{\operatorname{curl}}
\newcommand{\re}{\operatorname{Re}}
\newcommand{\im}{\operatorname{Im}}
\newcommand{\can}{\operatorname{can}}
\newcommand{\scal}{\operatorname{scal}}
\newcommand{\tr}{\operatorname{trace}}
\newcommand{\sgn}{\operatorname{sgn}}
\newcommand{\SL}{\operatorname{SL}}
\newcommand{\myspan}{\operatorname{span}}
\newcommand{\mydet}{\operatorname{det}}
\newcommand{\SO}{\operatorname{SO}}
\newcommand{\SU}{\operatorname{SU}}
\newcommand{\specl}{\operatorname{spec_{\mathcal{L}}}}
\newcommand{\fix}{\operatorname{Fix}}
\newcommand{\id}{\operatorname{id}}
\newcommand{\grad}{\operatorname{grad}}
\newcommand{\singsup}{\operatorname{singsupp}}
\newcommand{\wave}{\operatorname{wave}}
\newcommand{\ind}{\operatorname{ind}}
\newcommand{\mynull}{\operatorname{null}}
\newcommand{\inj}{\operatorname{inj}}
\newcommand{\arcsinh}{\operatorname{arcsinh}}
\newcommand{\Spec}{\operatorname{Spec}}
\newcommand{\Ind}{\operatorname{Ind}}
\newcommand{\Nul}{\operatorname{Nul}}
\newcommand{\inrad}{\operatorname{inrad}}
\newcommand{\mult}{\operatorname{mult}}
\newcommand{\Length}{\operatorname{Length}}
\newcommand{\Area}{\operatorname{Area}}
\newcommand{\Ker}{\operatorname{Ker}}
\newcommand{\floor}[1]{\left \lfloor #1  \right \rfloor}

\newcommand\restr[2]{{
  \left.\kern-\nulldelimiterspace 
  #1 
  \vphantom{\big|} 
  \right|_{#2} 
  }}


\section{Introduction}

Minimal submanifolds are critical points of the volume functional among all compactly supported variations. More precisely, let $\Sigma^k$ be a $k$-dimensional submanifold of some Riemannian ambient space $\overline{M}$. We say that $\Sigma^k$ is minimal if for every $p\in \Sigma$, there is a neighbourhood $U_p$ of $p$ in $\Sigma$ such that $U_p$ minimizes the $k$-dimensional volume among all of its smooth variations with fixed boundary $\partial U_p$. This is equivalent to the condition of vanishing mean curvature. Let $\Sigma^k$ be a compact manifold with boundary with the unit outer normal vector field $\eta$ on $\partial\Sigma^k$. We say that an isometric immersion $\Phi: \Sigma^k\rightarrow \overline{M}$ is a \textit{free boundary minimal immersion} (FBMI for short) if it is a critical point of the volume functional,  $\Phi(\partial\Sigma^k)\subset\partial\overline{M}$, and the image of $\eta$ is perpendicular to $\partial\overline{M}$.\\

In the case where the ambient space $\overline{M}$ is the closed unit ball in the Euclidean space $\mathbb{E}^{m+1}$, FBMI's have a decent description. Namely, all coordinate functions $\phi_i=x_i\circ\Phi$, $i=0,\dots,m$, are Steklov eigenfunctions with the eigenvalue $\sigma=1$, i.e., they are harmonic functions on $\Sigma^k$, and $\dfrac{\partial \phi_i}{\partial \eta}=\phi_i$ on $\partial\Sigma^k$. There is a similar description of FBMI's in the case that the ambient space $(\overline{M},\overline g)$ is a closed ball in a space of constant curvature, which is also the focus of this paper.\\

We consider the standard upper unit hemisphere $\mathbb{S}^m_+$ in $\mathbb{E}^{m+1}$ and also the Lobachevsky $m$-space $\mathbb{H}^m$ as the hyperboloid in the Minkowski space $\mathbb{M}^{1,m}$. When $(\overline{M},\overline g)$ is a suitably placed geodesic ball in $\mathbb{S}^m_+$ or in $\mathbb{H}^m$,  a map $\Phi:(\Sigma^k,g)\rightarrow M$ is an FBMI if and only if the coordinate functions $\phi_i$ are solutions to the \textit{Robin problem}. Namely, they satisfy the following system of equations
\begin{align}
\label{eqn:robin general}
 \begin{cases}
 \Delta_g \phi_i= \lambda \phi_i&\text{ in}\, \Sigma,\\
 \dfrac{\partial \phi_i}{\partial \eta}=\sigma \phi_i &\text{ on} \,\partial\Sigma,
 \end{cases}
 \end{align}
 for suitably prescribed values of $\lambda$ and $\sigma$ (see below). In this paper, we take the sign convention of the Laplace operator to be $\Delta_g=-\mydiv_g\circ \nabla^g$. The existence of FBMI into geodesic balls in $\mathbb{S}^m_+$ or in $\mathbb{H}^m$ corresponds to the case where solutions of a system of the type \eqref{eqn:robin general} exist and form an isometric immersion into the respective ambient space; see ~\cite{lima2023eigenvalue,medvedev2025free}. More generally, when the map $\Phi$ is just an immersion (not necessarily isometric), the coordinate functions are solutions of this system if and only if $\Phi$ is harmonic. In this case, $\Phi$ is a \textit{free boundary harmonic map} (FBHM for short). 
  
In order to find the metrics induced by FBMI's in geodesic balls in $\mathbb{S}^m_+$ and $\mathbb{H}^m$, we characterize them as critical points of certain functionals. Let us recall the definition (see~\cite{nadirashvili1996berger,el2000riemannian,karpukhin2022laplace}).
\begin{definition}
Let $g$ be a metric on a manifold $\Sigma^k$, and let $F$ be a functional defined on some set $\mathcal{S}(\Sigma^k)$ of metrics on $\Sigma^k$ containing $g$. We say that $g$ is an extremal metric for $F$ if for all one-parameter smooth family metrics $g(t)$ in $\mathcal{S}(\Sigma^k)$  with $g(0)=g$, we have 
\begin{align*}
F(g(t))\leqslant F(g)+o(t)\ \mathrm{or}\ F(g(t))\geqslant F(g)+o(t).
\end{align*}
In particular, if both left and right derivatives of $g(t)$ exist at $t=0$, the above is equivalent to
\begin{align*}
\lim_{t\rightarrow 0^+}\dfrac{d F(g(t))}{dt}\times \lim_{t\rightarrow 0^-}\dfrac{d F(g(t))}{dt}\leqslant 0.
\end{align*}
\end{definition}
Hence, minimal and maximal metrics are special cases of extremal metrics.

To the best of our knowledge, the description of metrics on minimal submanifolds as extremal metrics for certain functionals on a subset of Riemannian metrics goes back to the seminal paper by Nadirashvili~\cite{nadirashvili1996berger}, where he considered the first normalized eigenvalue functional $\Lambda_1$ on closed surfaces. In this case, the unit Euclidean sphere of certain dimension plays the role of the ambient manifold $\overline M$. Later, this approach was generalized in the papers~\cite{el2000riemannian,el2008laplacian} to the case of the $k$-th normalized eigenvalue functional $\Lambda_k$ on closed manifolds of arbitrary dimension. Notice that in the two-dimensional case, the functional $\Lambda_k$ is bounded from above (see~\cite{korevaar1993upper,hassannezhad2011conformal} for some implicit upper bounds and~\cite{yang1980eigenvalues,li1982new,el1983volume,kokarev2014variational,karpukhin2016upper} for some explicit upper bounds). It turns out that in the higher dimensional case, the functional $\Lambda_k$ is not bounded from above ~\cite{colbois1994riemannian}, but it is still bounded from above in any conformal class (see e.g.,~\cite{korevaar1993upper,hassannezhad2011conformal} again). Further, relying on an analogy between the spectrum of a closed Riemannian manifold and the Steklov spectrum of a Riemannian manifold with boundary, Fraser and Schoen in~\cite{fraser2016sharp,fraser2013minimal} characterized metrics of free boundary minimal surfaces in the unit Euclidean ball as extremal metrics for the $k$-th normalized Steklov eigenvalue. Its higher dimensional generalization was considered in the papers~\cite{fraser2019shape,karpukhin2022laplace}. This functional is bounded from above for compact surfaces with boundaries (see~\cite{hassannezhad2011conformal,colbois2011isoperimetric} for some implicit upper bounds and~\cite{fraser2011first,kokarev2014variational,girouard2012upper,karpukhin2015bounds,medvedev2022degenerating} for some explicit upper bounds), but is not bounded even in a conformal class, when a compact manifold with boundary has dimension at least 3 (see~\cite{colbois2019compact}). For more information about these functionals, we refer an interested reader to the surveys~\cite{penskoi2013extremal,girouard2017spectral,colbois2024some}. We also mention the papers~\cite{petrides2023extremal,petrides2024shape,petrides2022variational,petrides2023non,petrides2023laplace,petrides2024critical}, where extremal metrics for functionals containing combinantions of Laplace and Steklov eigenvalues were studied, as well as their connection to the theory of (free boundary) minimal submanifolds. Very recently, Lima and Menezes introduced in~\cite{lima2023eigenvalue} the functional $\Theta_{r}$ on compact surfaces with boundary and proved that this functional is bounded from above. They characterized maximal metrics for this functional as metrics of free boundary minimal surfaces in a spherical cap of radius $r\in (0,\pi/2)$. These results were generalized by the second author in~\cite{medvedev2025free}: the functionals $\Theta_{r,k}$ and $\Omega_{r,k}$ on compact surfaces with boundaries were defined for arbitrary $k\in \mathbb N^+$, and it was proved that extremal metrics for these functional are exactly the metrics induced by FBMI into a spherical cap or a geodesic ball of radius $r$ in the hyperbolic space, respectively. It was also shown that $\Theta_{r,k}$ is bounded from above but not bounded from below on Riemannian metrics on a given surface with boundary, and $\Omega_{r,k}$ is bounded from below but not bounded from above on the same set. 

In this paper, we consider the following functionals on Riemannian metrics on a given compact manifold with boundary $\Sigma^k$
$$
\Theta_{r,i}(\Sigma,g) := (\theta_0(g)\cos^2 r+\theta_i(g)\sin^2 r)|\partial \Sigma|_g + 2\vert \Sigma\vert_g,
$$
$$
\Omega_{r,i}(\Sigma,g):=(-\omega_0(g) \cosh^2 r+\omega_i(g) \sinh^2 r)|\partial\Sigma|_g + 2\vert \Sigma\vert_g,
$$
where the functionals  $\Omega_{r,i}(\Sigma,\cdot)$ and $\Theta_{r,i}(\Sigma,\cdot)$ are defined on the set $\mathcal{R}(\Sigma^k)$ of all Riemannian metrics on $\Sigma^k$ and its subclass  $\mathcal{R}_{k}(\Sigma^k)$ (see Section \ref{sec:2.1} for the definition), respectively. The term $\theta_i=\sigma_i(g,-k)$ with $i\geqslant 1$ is the $i$-th element of the sequence of eigenvalues $\{\sigma_i(g,-k)\}$ arranged in increasing order, when we fix the Laplacian eigenvalue $\lambda=k$ in \eqref{eqn:robin general} (we call these eigenvalues the \textit{Steklov eigenvalues of frequency $k$}). Also $\omega_i=\sigma_i(g,k)$, the \textit{Steklov eigenvalues of frequency $-k$}, are defined in a similar way. They will be our first class of candidates that characterize metrics admitting the desired FBMI or FBHM. These functionals $\Theta_{r,i}$ and $\Omega_{r,i}$ have the exact same form as in the 2-dimensional case (see~\cite{lima2023eigenvalue,medvedev2025free}). A reasoning of why the functionals are in such forms is given in Section~\ref{subsec:coeff}.

We also introduce the following functionals defined on the set $\mathcal{R}(\Sigma^k)$ using Laplacian eigenvalues 
$$
 \Xi_{r,i}^+(\Sigma,g):=\min\left\lbrace \lambda_0(g,-\tan(r)), \lambda_i(g,  \cot(r))\right\rbrace|\Sigma|_g^{2/k},\ i\geqslant 1, \, 0<r<\frac{\pi}{2},
$$
 and 
$$
 \Xi_{r,i}^-(\Sigma,g):=\min\{\lambda_0(g, \tanh(r)),\lambda_i(g, \coth(r))\}|\Sigma|_g^{2/k},\, i\geqslant 1, \, r>0.
$$
We will demonstrate the relationship between the critical points of these functionals and metrics admitting FBMI or FBHM into geodesic balls in $\mathbb{S}^m_+$ or $\mathbb{H}^m$, summarized in the following theorems. 

\begin{theorem}\label{2.2}
Let $\Sigma^k$ be a $k$-dimensional compact smooth manifold with boundary. For $r<\dfrac{\pi}{2}$ and $i\geqslant 1$. Let $\mathbb B^m(r)$ denote a ball of radius $r$ in $\mathbb{S}^m$ centered at the point $(1,0,\ldots,0)$ and $V_i(g)$ the eigenspace of the Steklov eigenvalue $\theta_i$ with frequency $k$. Then the following is true:
\begin{itemize} 
\item If $g$ is an extremal metric for $\Theta_{r,i}$ on the class $\mathcal{R}_{k}(\Sigma)$, then there exist Steklov eigenfunctions with frequency $k$: $v_0\in V_0(g)$ and $v_1,\ldots,v_m\in V_i(g)$ such that the following map
$$\Phi:\Sigma\rightarrow \mathbb{R}^{m+1},\ x\mapsto (v_0(x),v_1(x),\ldots,v_m(x))$$
is in fact an FBMI into $\mathbb B^m(r)$.
\item If $g$ is an extremal metric for $\Theta_{r,i}$ on a Riemannian conformal class $[g]\cap\mathcal{R}_{k}(\Sigma)$, then there exist Steklov eigenfunctions of frequency $k$ $v_0\in V_0(g)$ and $v_1,\ldots,v_m\in V_i(g)$ such that the map $\Phi$ is an FBHM into $\mathbb B^m(r)$.
\end{itemize}
\end{theorem}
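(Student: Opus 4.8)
The plan is to compute the first variation of $\Theta_{r,i}$ along a one-parameter family of metrics, convert the extremality condition into a pointwise identity between eigenfunctions by a convexity argument, and then recognise that identity as the system defining an FBMI (resp.\ FBHM) into $\mathbb{B}^m(r)$. Concretely, fix an extremal metric $g$, a symmetric $2$-tensor $h$, and a family $g(t)$ with $g(0)=g$, $\dot g(0)=h$ (every such $h$ is admissible, $\mathcal{R}_{k}(\Sigma)$ being open). I would first record the Hadamard formulas $\frac{d}{dt}\big|_{0}|\Sigma|_{g(t)}=\tfrac12\int_\Sigma\langle g,h\rangle_g\,dv_g$ and $\frac{d}{dt}\big|_{0}|\partial\Sigma|_{g(t)}=\tfrac12\int_{\partial\Sigma}\langle g,h\rangle_g\,ds_g$ (the latter along $\partial\Sigma$), and the variation of a Steklov eigenvalue of frequency $k$: writing $\sigma$ as the Rayleigh quotient $\big(\int_\Sigma(|\nabla u|_g^2-ku^2)\,dv_g\big)\big/\big(\int_{\partial\Sigma}u^2\,ds_g\big)$ and differentiating with the eigenfunction $u$ held fixed and normalised by $\int_{\partial\Sigma}u^2\,ds_g=1$, one gets $\dot\sigma=\langle T_u,h\rangle$ with
\[
T_u=\Big(-\nabla u\otimes\nabla u+\tfrac12\big(|\nabla u|_g^2-ku^2\big)g\Big)\,dv_g-\tfrac12\,\sigma\,u^2\,g|_{\partial\Sigma}\,ds_g .
\]
Since $\theta_0$ is the bottom of the spectrum it is simple with a positive eigenfunction, so $\theta_0<\theta_i$ and $\dot\theta_0=\langle T_{v_0},h\rangle$ is a two-sided derivative; for the possibly multiple $\theta_i$, Rellich perturbation theory gives the one-sided derivatives as $\min$ and $\max$ of $v\mapsto\langle T_v,h\rangle$ over the $L^2(\partial\Sigma)$-unit sphere of $V_i(g)$. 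Thus $\frac{d}{dt}\big|_{0^\pm}\Theta_{r,i}=\langle B+|\partial\Sigma|_g\cos^2 r\,T_{v_0},h\rangle+|\partial\Sigma|_g\sin^2 r\cdot(\min/\max_{v}\langle T_v,h\rangle)$, where $B=g\,dv_g+\tfrac12(\theta_0\cos^2 r+\theta_i\sin^2 r)\,g|_{\partial\Sigma}\,ds_g$.

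The heart of the proof is the passage from extremality to a tensor identity. The condition $\big(\frac{d}{dt}\big|_{0^+}\Theta_{r,i}\big)\big(\frac{d}{dt}\big|_{0^-}\Theta_{r,i}\big)\le 0$ for all $h$ is equivalent, by the Hahn--Banach separation argument of El~Soufi--Ilias and Fraser--Schoen applied in the finite-dimensional space spanned by the relevant tensors, to $0$ lying in the convex hull of the compact set $\{\,|\partial\Sigma|_g(\cos^2 r\,T_{v_0}+\sin^2 r\,T_v)+B:\ v\in V_i(g),\ \int_{\partial\Sigma}v^2\,ds_g=1\,\}$. Carath\'eodory's theorem together with the homogeneity $T_{cv}=c^2T_v$ (used to absorb the convex weights) then produces eigenfunctions $v_0\in V_0(g)$ and $v_1,\dots,v_m\in V_i(g)$ with $\int_{\partial\Sigma}v_0^2\,ds_g=1$, $\sum_\beta\int_{\partial\Sigma}v_\beta^2\,ds_g=1$ and
\[
\cos^2 r\,T_{v_0}+\sin^2 r\sum_{\beta=1}^{m}T_{v_\beta}+\tfrac1{|\partial\Sigma|_g}B=0 ,
\]
which, since a distribution $\alpha\,dv_g+\gamma\,ds_g$ vanishes iff $\alpha\equiv 0$ on $\Sigma$ and $\gamma\equiv 0$ on $\partial\Sigma$, splits into an interior identity and a boundary identity. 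I expect this step, together with the careful bookkeeping of the one-sided derivatives of the possibly degenerate $\theta_i$, to be the main obstacle.

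It then remains to unwind these. After rescaling — replace $v_0$ by $\sqrt{|\partial\Sigma|_g}\cos r\cdot v_0\in V_0(g)$ and $v_\beta$ by $\sqrt{|\partial\Sigma|_g}\sin r\cdot v_\beta\in V_i(g)$ — set $\Phi=(v_0,v_1,\dots,v_m)$ and $e:=|\Phi|^2$. The interior identity reads $\Phi^*g_{\mathbb{E}^{m+1}}=\phi\,g$ for a scalar $\phi$ (so $\Phi$ is weakly conformal) with $\phi$ an affine function of $e$; combining it with $\Delta_g\Phi=k\Phi$ and $\Delta_g(v_\alpha^2)=2kv_\alpha^2-2|\nabla v_\alpha|_g^2$ gives $\Delta_g(e-1)=\tfrac{4k}{2-k}(e-1)$ for $k\neq 2$ (for $k=2$ the trace of the interior identity forces $e\equiv 1$ at once, which is why the surface case was within reach in \cite{lima2023eigenvalue,medvedev2025free}; $k=1$ is elementary). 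The boundary identity, with $\partial_\eta v_0=\theta_0 v_0$, $\partial_\eta v_\beta=\theta_i v_\beta$ and the normalisations above, yields $\theta_0 v_0^2+\theta_i\sum_\beta v_\beta^2=\theta_0\cos^2 r+\theta_i\sin^2 r=:A$ on $\partial\Sigma$, whence $\partial_\eta e\equiv 2A$ there, and $\int_{\partial\Sigma}(e-1)\,ds_g=0$ by those normalisations. For $k\ge 3$ one multiplies $\Delta_g(e-1)=\tfrac{4k}{2-k}(e-1)$ by $e-1$ and integrates by parts: the boundary term is $2A\int_{\partial\Sigma}(e-1)\,ds_g=0$, and since $\tfrac{4k}{2-k}<0$ this forces $\nabla(e-1)\equiv 0$, hence $e\equiv 1$ and $A=0$. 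This normalisation — immediate when $k=2$ but requiring the sign of $\tfrac{4k}{2-k}$ and the boundary normalisations in general — is the genuinely new point in the higher-dimensional case.

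With $e\equiv 1$, $\Phi$ maps into $\mathbb{S}^m$, and differentiating $|\Phi|^2\equiv 1$ twice with $\Delta_g\Phi=k\Phi$ gives $|\nabla\Phi|_g^2=k$, so the tension field of $\Phi\colon(\Sigma,g)\to\mathbb{S}^m$ vanishes: $\Phi$ is harmonic. With the weak conformality $\Phi^*g_{\mathbb{S}^m}=\phi\,g$, the normal/tangent splitting of the harmonic-map equation on $\mathbb{S}^m$ (using $k\neq 2$) forces $\phi$ constant; the image is then minimal in $\mathbb{S}^m$, so its induced metric $\phi g$ satisfies $\Delta_{\phi g}\Phi=k\Phi$, and comparison with $\Delta_g\Phi=k\Phi$ (a constant conformal change) forces $\phi=1$, so $\Phi$ is an isometric minimal immersion into $\mathbb{S}^m$. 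On $\partial\Sigma$, $e\equiv 1$ and $A=0$ reduce the boundary relation to $v_0^2\equiv\cos^2 r$ (using $\theta_0<\theta_i$); since $v_0>0$ this gives $\Phi_0\equiv\cos r$, so $\Phi(\partial\Sigma)\subset\{x_0=\cos r\}\cap\mathbb{S}^m=\partial\mathbb{B}^m(r)$, and $\Delta_g\Phi_0=k\Phi_0\ge 0$ with the minimum principle gives $\Phi_0\ge\cos r$, i.e.\ $\Phi(\Sigma)\subset\mathbb{B}^m(r)$. Finally $\Phi_*\eta=\partial_\eta\Phi=(\theta_0\cos r,\theta_i(\Phi_1,\dots,\Phi_m))$ on $\partial\Sigma$ satisfies $\langle\Phi_*\eta,\Phi\rangle=\tfrac12\partial_\eta e=0$ and $\langle\Phi_*\eta,(0,\vec w)\rangle=\theta_i\langle(\Phi_1,\dots,\Phi_m),\vec w\rangle=0$ for every $\vec w\perp(\Phi_1,\dots,\Phi_m)$, so $\Phi_*\eta$ spans the line in $T\mathbb{S}^m$ orthogonal to $T\partial\mathbb{B}^m(r)$: $\Phi$ meets $\partial\mathbb{B}^m(r)$ orthogonally, hence is an FBMI into $\mathbb{B}^m(r)$ (and $\theta_0=-\tan r$, $\theta_i=\cot r$ drop out). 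For extremality within $[g]\cap\mathcal{R}_{k}(\Sigma)$ only conformal variations $h=2\psi g$ are admissible, so only the $g$-trace of the tensor identity is obtained; this still yields $\Delta_g(e-1)=\tfrac{4k}{2-k}(e-1)$ with the same boundary data and normalisations, hence $e\equiv 1$ and all the boundary relations, so $\Phi$ maps $\Sigma$ into $\mathbb{B}^m(r)$, satisfies $\Delta_g\Phi=k\Phi$, and meets $\partial\mathbb{B}^m(r)$ orthogonally — an FBHM — but the weak conformality, hence isometry, of $\Phi$ is no longer forced.
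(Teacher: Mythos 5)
Your proposal is correct and follows essentially the same route as the paper: the same eigenvalue variation formulas and Hahn--Banach/convex-hull step, the same rescaling to $v_0,\dots,v_m$, the same function $e-1$ satisfying $\Delta_g(e-1)=\tfrac{-4k}{k-2}(e-1)$ with constant normal derivative and zero boundary mean, and the same integration by parts using the sign of the coefficient to force $e\equiv 1$, after which the boundary identities yield the free boundary condition. The only cosmetic difference is that you recover the isometry via a harmonic-map conformality argument, whereas the paper reads $g=\sum dv_j\otimes dv_j$ directly from the interior identity once $e\equiv 1$.
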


\begin{theorem}\label{2.3}
Given $i\geqslant 1$, suppose for some $g\in \mathcal{R}_{k}(\Sigma^k)$ we have either $\theta_i(g)<\theta_{i+1}(g)$ or $\theta_i(g)>\theta_{i-1}(g)$. Let $V_i(g)$ be the eigenspace of the Steklov eigenvalue $\theta_i$ of frequency $k$.
\begin{itemize}
\item Suppose there exists $v_0\in V_0(g)$ and independent $v_1,\ldots,v_m\in V_i(g)$ such that
\begin{enumerate}
\item $dv_0\otimes dv_0+\displaystyle\sum_{j=1}^m dv_j\otimes dv_j=g,$
\item $v_0^2+\displaystyle\sum_{j=1}^m v_j^2=1.$
\end{enumerate}
Then there is some $0<r<\dfrac{\pi}{2}$ such that $g$ is an extremal metric for $\Theta_{r,i}$ in the set $\mathcal{R}_{k}(\Sigma)$.
\item If there exist $v_0\in V_0(g)$ and independent $v_1,\ldots,v_m\in V_i(g)$ such that $v_0^2+\displaystyle\sum_{j=1}^m v_j^2=1$, the metric $g$ is extremal for $\Theta_{r,i}$ in the set $[g]\cap\mathcal{R}_{k}(\Sigma)$ for some $0<r<\dfrac{\pi}{2}$.
\end{itemize}
\end{theorem}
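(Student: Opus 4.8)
The plan is to verify the extremality inequality directly. Fix an arbitrary admissible smooth family $g(t)$ with $g(0)=g$, write $h=\dot g(0)$ and $\bar h=h|_{T\partial\Sigma}$, and for $u\in C^\infty(\Sigma)$ put $S(u):=du\otimes du-\tfrac12(|\nabla_g u|^2-ku^2)g$. Combining the Hadamard-type formulas $\tfrac{d}{dt}\big|_{t=0}|\Sigma|_{g(t)}=\tfrac12\int_\Sigma\tr_g h\,dv_g$ and $\tfrac{d}{dt}\big|_{t=0}|\partial\Sigma|_{g(t)}=\tfrac12\int_{\partial\Sigma}\tr_{\bar g}\bar h\,ds_g$ with first-order perturbation theory for the generalized eigenvalue problem $\int_\Sigma(|\nabla u|^2-ku^2)=\sigma\int_{\partial\Sigma}u^2$, one finds that $\theta_0$ (which is simple, with a positive eigenfunction) is differentiable along $g(t)$, while --- exactly under the hypothesis $\theta_i<\theta_{i+1}$ or $\theta_i>\theta_{i-1}$, which forces $\theta_i(g(t))$ to be the top or the bottom branch of the perturbed cluster near $t=0$ --- the two one-sided derivatives of $\theta_i$ are the extreme eigenvalues of the quadratic form $u\mapsto -\int_\Sigma\langle h,S(u)\rangle\,dv_g-\tfrac{\theta_i}{2}\int_{\partial\Sigma}u^2\,\tr_{\bar g}\bar h\,ds_g$ on $V_i(g)$, relative to $u\mapsto\int_{\partial\Sigma}u^2\,ds_g$. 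Substituting these into $\Theta_{r,i}(\Sigma,g(t))=(\theta_0\cos^2 r+\theta_i\sin^2 r)|\partial\Sigma|_{g(t)}+2|\Sigma|_{g(t)}$ and differentiating, the left and right $t$-derivatives of $\Theta_{r,i}$ at $0$ become (in an order depending on which branch $\theta_i$ is) the minimum and the maximum over $v\in V_i(g)\setminus\{0\}$ of one explicit functional $\mathcal D(v)$, whose $\theta_0$- and volume-contributions do not depend on $v$.

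Next I would read off from the hypotheses the value of $r$ to use. Testing the Rayleigh quotient with $u\equiv1$ gives $\theta_0(g)\le -k|\Sigma|_g/|\partial\Sigma|_g<0$, and $\theta_i(g)>\theta_0(g)$ since $\theta_0$ is simple and $i\ge1$. Differentiating $v_0^2+\sum_{j\ge1}v_j^2\equiv1$ along the conormal $\eta$ on $\partial\Sigma$ and using $\partial_\eta v_0=\theta_0 v_0$, $\partial_\eta v_j=\theta_i v_j$, one gets $\theta_0 v_0^2+\theta_i\sum_{j\ge1}v_j^2=0$ on $\partial\Sigma$, hence $v_0^2\equiv\theta_i/(\theta_i-\theta_0)$ there; this forces $\theta_i(g)>0$ (the case $\theta_i=0$ being ruled out by unique continuation applied to $v_0$), so one may set $r\in(0,\pi/2)$ with $\cos^2 r:=\theta_i(g)/(\theta_i(g)-\theta_0(g))$. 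Then, on $\partial\Sigma$, $v_0\equiv\cos r$ (after normalizing $v_0>0$), $\sum_{j\ge1}v_j^2\equiv\sin^2 r$, and $\theta_0\cos^2 r+\theta_i\sin^2 r=0$. Applying $\Delta_g$ to $\sum_{j=0}^m v_j^2\equiv1$ and using $\Delta_g v_j=kv_j$ gives $\sum_{j=0}^m|\nabla_g v_j|^2\equiv k$; together with assumption (1) this upgrades to the tensor identity $\sum_{j=0}^m S(v_j)=g$, whereas in the conformal case, where (1) is dropped, one retains only its trace $\tr_g(\sum_{j=0}^m S(v_j))=k=\tr_g g$. (Incidentally, these relations are exactly what identifies $\Phi=(v_0,\dots,v_m)$ as an FBMI, resp.\ FBHM, into $\mathbb B^m(r)$, which motivates the choice of $r$; but only the displayed identities enter the computation below.)

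The core of the argument is then a single cancellation. Evaluate $\mathcal D$ at $v=v_j$, weight by $N(v_j):=\int_{\partial\Sigma}v_j^2\,ds_g$, and sum over $j=1,\dots,m$. Using $N(v_0)=\cos^2 r\,|\partial\Sigma|_g$ and $\sum_{j=1}^m N(v_j)=\int_{\partial\Sigma}(1-v_0^2)=\sin^2 r\,|\partial\Sigma|_g$, then substituting $\sum_{j\ge1}S(v_j)=g-S(v_0)$ (or, in the conformal case, its trace), $\sum_{j\ge1}v_j^2=\sin^2 r$ on $\partial\Sigma$, and $\theta_0\cos^2 r+\theta_i\sin^2 r=0$, all terms cancel in pairs and one is left with $\sum_{j=1}^m N(v_j)\,\mathcal D(v_j)=0$. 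Since the weights $N(v_j)$ are nonnegative with positive sum $\sin^2 r\,|\partial\Sigma|_g$ (indeed each $N(v_j)>0$: otherwise $v_j$ has vanishing Cauchy data on $\partial\Sigma$, forcing $v_j\equiv0$ by unique continuation and contradicting linear independence), there are $v_{j_\pm}\in V_i(g)\setminus\{0\}$ with $\mathcal D(v_{j_+})\ge0\ge\mathcal D(v_{j_-})$. Consequently one one-sided derivative of $\Theta_{r,i}(\Sigma,g(t))$ at $t=0$, being a maximum of $\mathcal D$ over $V_i(g)\setminus\{0\}$, is $\ge\mathcal D(v_{j_+})\ge0$, and the other, being a minimum, is $\le\mathcal D(v_{j_-})\le0$; their product is $\le0$, so $g$ is extremal for $\Theta_{r,i}$ on $\mathcal R_k(\Sigma)$. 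For the second bullet one runs the identical argument with $h$ restricted to be pointwise conformal to $g$: then every $S(v_j)$ enters only through its $g$-trace, so the trace identity $\tr_g(\sum S(v_j))=k$ does the job and assumption (1) is not needed, giving extremality on $[g]\cap\mathcal R_k(\Sigma)$.

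I expect the main obstacle to be the correct first-order analysis of the clustered eigenvalue $\theta_i$: establishing that the one-sided derivatives of a branch of $\theta_i(g(t))$ are the eigenvalues of the stated quadratic form on $V_i(g)$, and, crucially, that the hypothesis $\theta_i<\theta_{i+1}$ or $\theta_i>\theta_{i-1}$ is precisely the condition under which $\theta_i(g(t))$ is, near $t=0$, the extreme (top or bottom) branch of the cluster, so that its one-sided derivatives are the extreme eigenvalues rather than interior ones. This I would treat carefully or invoke from the earlier part of the paper; everything else is the bookkeeping that makes the cancellation $\sum_j N(v_j)\mathcal D(v_j)=0$ work, which rests on the two points highlighted above --- that the hypotheses pin down $r$ via $v_0|_{\partial\Sigma}\equiv\cos r$, and that $\sum_{j=0}^m S(v_j)=g$ (or its trace).
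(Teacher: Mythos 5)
Your proposal is correct and follows essentially the same route as the paper: differentiate $\sum_j v_j^2\equiv 1$ along $\eta$ to get $\theta_0v_0^2+\theta_i(1-v_0^2)=0$ and hence a constant $v_0|_{\partial\Sigma}$ determining $r$ via $\cos^2 r=\theta_i/(\theta_i-\theta_0)$, then show the $\Vert v_j\Vert^2_{L^2(\partial\Sigma)}$-weighted sum of the derivative quadratic form over $j=1,\dots,m$ vanishes, so terms of both signs exist, and conclude via the one-sided derivative analysis of the clustered eigenvalue under the hypothesis $\theta_i<\theta_{i+1}$ or $\theta_i>\theta_{i-1}$. The paper's weights $t_j=\Vert v_j\Vert^2_{L^2(\partial\Sigma,g^b)}/(a\sin^2 r)$ are exactly your normalized $N(v_j)$, and its identity $\sum_j t_jQ_r^s(u_0,u_j)=0$ is your cancellation.
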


In these two theorems, $\mathcal{R}_{k}(\Sigma)$ denotes the subset of Riemannian metrics on $\Sigma$, which do not admit $k$ as a Dirichlet eigenvalue. This restriction is necessary (see Section~\ref{sphere}) in the spherical case. 

In Section~\ref{sec:hyper}, we also prove similar theorems for FBMI and FBHM into a geodesic ball in $\mathbb H^m$.

It turns out that the metrics induced by FBMI into geodesic balls in the hemisphere can be obtained in a different approach. Namely, instead of the functional $\Theta_{r,i}$ on $\mathcal R_{k}(\Sigma^k)$, one can take the functional $\Xi_{r,i}^+(\Sigma,g)$, defined above, on the set of \textit{all metrics} on $\Sigma$. This functional admits extremal metrics, which are metrics induced by FBMI into geodesic balls in the unit hemisphere. The formal statement is as follows.

\begin{theorem}\label{4.4}
The functional $\Xi_{r,i}^-(\Sigma,g)$ has no extremal metrics on $\mathcal{R}(\Sigma)$, and the functional $\Xi_{r,i}^+(\Sigma,g)$ has extremal metrics on $\mathcal{R}(\Sigma)$ only if
\begin{align}
\label{eqn:laplacian eigenvalue equal}
\lambda_0(g,- \tan(r))=\lambda_i(g,  \cot(r)).
\end{align}
Moreover, if $g$ is an extremal metric for $\Xi_{r,i}^+(\Sigma,g)$, then there exist $v_0\in E_0(g,-\tan r)$ and $v_1,\ldots,v_m\in E_i(g,\cot  
r)$ such that the map $(v_0,v_1,\ldots,v_m)$ induces an FBMI from $\Sigma$ into the geodesic ball $\mathbb B^m(r)\subset \mathbb{S}^m_+$ centred at $(1,0,\ldots,0)$.
\end{theorem}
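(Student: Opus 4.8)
The plan is to run the variational method for eigenvalue functionals (Nadirashvili, El Soufi--Ilias, Fraser--Schoen \cite{nadirashvili1996berger,el2000riemannian,fraser2016sharp}, in the Steklov--Robin form of \cite{lima2023eigenvalue,medvedev2025free}), carrying along the two Robin parameters and the minima in $\Xi^{\pm}_{r,i}$. First I would record the Hadamard-type first variation: for $g_t=g+th$ and a normalized eigenfunction $u$ ($\int_\Sigma u^2\,dv_g=1$) of $\lambda_j(g,\sigma)$ in \eqref{eqn:robin general} with $\sigma$ fixed, the eigenvalue varies as
\begin{align*}
\dot\lambda_j=-\int_\Sigma\langle T_u,h\rangle_g\,dv_g-\tfrac{\sigma}{2}\int_{\partial\Sigma}u^2\,\mathrm{tr}_{\bar g}\,\bar h\,ds_g,\qquad T_u:=du\otimes du-\tfrac12|\nabla u|^2g+\tfrac{\lambda_j}{2}u^2g,
\end{align*}
while $|\Sigma|_{g_t}^{2/k}$ varies as $\tfrac1k|\Sigma|_g^{2/k-1}\int_\Sigma\mathrm{tr}_g h\,dv_g$; for multiple eigenvalues $\dot\lambda_j$ is replaced by its one-sided values, the extrema of $h\mapsto-\int\langle T_u,h\rangle-\tfrac\sigma2\int_{\partial\Sigma}u^2\,\mathrm{tr}\,\bar h$ over the eigenspace. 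Two facts will be used repeatedly: the ground state $\lambda_0(\cdot,\sigma)$ is simple and its positive eigenfunction does not vanish on $\partial\Sigma$ when $\sigma\neq0$ (Hopf's lemma together with the Robin condition); and, by unique continuation, no Robin eigenfunction with $\sigma\neq0$ has vanishing Cauchy data on $\partial\Sigma$, whence $\partial_\sigma\lambda_j(g,\sigma)=-\int_{\partial\Sigma}u^2/\int_\Sigma u^2<0$, one-sidedly if $\lambda_j$ is not simple.

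The non-existence for $\Xi^-_{r,i}$ I would get from homothety. From $\lambda_j(cg,\sigma)=c^{-1}\lambda_j(g,c^{1/2}\sigma)$ and $|\Sigma|_{cg}^{2/k}=c\,|\Sigma|_g^{2/k}$ one computes $\Xi^-_{r,i}(\Sigma,cg)=\min\{\lambda_0(g,c^{1/2}\tanh r),\lambda_i(g,c^{1/2}\coth r)\}\,|\Sigma|_g^{2/k}$; since $\tanh r,\coth r>0$ and Robin eigenvalues are strictly decreasing in the parameter, this is strictly decreasing in $c$, so $t\mapsto\Xi^-_{r,i}(\Sigma,(1+t)g)$ has strictly negative one-sided derivatives at $t=0$ and the product of the two is positive; hence no metric is extremal. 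The same computation gives $\Xi^+_{r,i}(\Sigma,cg)=\min\{\lambda_0(g,-c^{1/2}\tan r),\lambda_i(g,c^{1/2}\cot r)\}\,|\Sigma|_g^{2/k}$, whose first branch increases and second branch decreases in $c$, so this obstruction is absent. To prove that an extremal $g$ forces \eqref{eqn:laplacian eigenvalue equal}, suppose $\lambda_0(g,-\tan r)\neq\lambda_i(g,\cot r)$; near $g$ the functional $\Xi^+_{r,i}$ then coincides with a single functional $\lambda_j(\cdot,\sigma)|\Sigma|^{2/k}$ with $\sigma\in\{-\tan r,\cot r\}\setminus\{0\}$, and the boundary part of the balancing condition reads $\sum_l\nu_l u_l^2\equiv0$ on $\partial\Sigma$ with $\nu_l\geq0$ not all $0$; if the active eigenvalue is $\lambda_0(\cdot,-\tan r)$ this contradicts the non-vanishing of its positive ground state on $\partial\Sigma$, and in general it forces some eigenfunction to vanish on $\partial\Sigma$ against unique continuation.

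Now assume $g$ extremal with $\lambda_0(g,-\tan r)=\lambda_i(g,\cot r)=:\lambda_*$. The balancing principle for the minimum of $\lambda_0(\cdot,-\tan r)|\Sigma|^{2/k}$ and $\lambda_i(\cdot,\cot r)|\Sigma|^{2/k}$ produces normalized eigenfunctions $v_0\in E_0(g,-\tan r)$, $v_1,\dots,v_N\in E_i(g,\cot r)$ and weights $\mu>0$, $\nu_j\geq0$ with $\mu+\sum\nu_j=1$ such that
\begin{align*}
\mu\,dv_0\otimes dv_0+\sum_j\nu_j\,dv_j\otimes dv_j=f\,g\ \text{ on }\Sigma,\qquad \mu\tan r\,v_0^2=\cot r\sum_j\nu_j v_j^2\ \text{ on }\partial\Sigma,
\end{align*}
where $f=\tfrac12Q-\tfrac{\lambda_*}2P+\tfrac{\lambda_*}{k|\Sigma|_g}$ with $P=\mu v_0^2+\sum\nu_j v_j^2$, $Q=\mu|\nabla v_0|^2+\sum\nu_j|\nabla v_j|^2$. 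Using $\Delta_g(v^2)=-2|\nabla v|^2+2\lambda_* v^2$ for each eigenfunction, the boundary identity (which yields $\partial_\eta P\equiv0$ on $\partial\Sigma$), and the trace of the interior identity, one finds for $k\geq3$ that $P-|\Sigma|_g^{-1}$ is a Neumann eigenfunction of eigenvalue $-\tfrac{4\lambda_*}{k-2}$ with zero mean (and for $k=2$ the trace identity gives $\lambda_*(P-|\Sigma|_g^{-1})=0$ directly). Since $\lambda_*=\lambda_0(g,-\tan r)>0$ — its Rayleigh quotient $(\int|\nabla u|^2+\tan r\int_{\partial\Sigma}u^2)/\int u^2$ is nonnegative, and vanishing would force $v_0\equiv0$ — this ``eigenvalue'' is negative, hence not in the Neumann spectrum, so $P\equiv|\Sigma|_g^{-1}$. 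Then $Q$ and $f=\tfrac{\lambda_*}{k|\Sigma|_g}$ are constant, and combining $P\equiv|\Sigma|_g^{-1}$ with the boundary identity gives $\mu v_0^2\equiv\tfrac{\cos^2 r}{|\Sigma|_g}$ and $\sum_j\nu_j v_j^2\equiv\tfrac{\sin^2 r}{|\Sigma|_g}$ on $\partial\Sigma$. (Here and below I take $k\geq2$; the one-dimensional case is elementary.)

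Finally I would rescale: put $v_0':=|\Sigma|_g^{1/2}\sqrt\mu\,v_0$ and $v_j':=|\Sigma|_g^{1/2}\sqrt{\nu_j}\,v_j$ ($j\geq1$). Then $\sum_j(v_j')^2\equiv1$, $\sum_j dv_j'\otimes dv_j'=\tfrac{\lambda_*}{k}g$, $\Delta_g v_j'=\lambda_* v_j'$, $v_0'\equiv\cos r>0$ and $\sum_{j\geq1}(v_j')^2\equiv\sin^2 r$ on $\partial\Sigma$, and the Robin conditions $\partial_\eta v_0'=-\tan r\,v_0'$, $\partial_\eta v_j'=\cot r\,v_j'$ on $\partial\Sigma$ persist. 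Evaluating $\sum dv_j'\otimes dv_j'=\tfrac{\lambda_*}{k}g$ on the $g$-unit outer normal $\eta$ along $\partial\Sigma$ and using these Robin conditions gives $\tfrac{\lambda_*}{k}=\tan^2 r\,(v_0')^2+\cot^2 r\sum_{j\geq1}(v_j')^2=\sin^2 r+\cos^2 r=1$, so $\lambda_*=k$ and $\sum dv_j'\otimes dv_j'=g$. Thus $\Phi:=(v_0',v_1',\dots,v_m')$ (writing the $v_j'$ as $v_1',\dots,v_m'$, with some entries zero if $N<m$) is an isometric immersion of $(\Sigma,g)$ into $\mathbb{S}^m$ with $\Delta_g\Phi=k\Phi$, hence minimal by Takahashi's theorem; the strong minimum principle applied to the superharmonic function $v_0'$ ($\Delta_g v_0'=k v_0'>0$) gives $v_0'>\cos r$ in the interior, so $\Phi(\Sigma)\subset\mathbb{B}^m(r)$ and $\Phi(\partial\Sigma)\subset\partial\mathbb{B}^m(r)$; and along $\partial\Sigma$ the Robin conditions say exactly that $\Phi_*\eta=\bigl(-\sin r,\tfrac{\cos r}{\sin r}(v_1',\dots,v_m')\bigr)$ is the outer unit normal of $\partial\mathbb{B}^m(r)$ in $\mathbb{S}^m$, i.e. the free boundary condition; since $r<\pi/2$, $\mathbb{B}^m(r)\subset\mathbb{S}^m_+$. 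The hard part is the third and fourth steps: justifying the balancing identity despite the minimum and the possible multiplicity of $\lambda_i$, and then propagating it to show that the conformal factor $f$ (equivalently $P$) is constant and $\lambda_*=k$. The sign $\lambda_*>0$, forced by the negative Robin parameter $-\tan r$, is precisely what powers the Neumann-spectrum argument, and its failure for the positive parameters of $\Xi^-_{r,i}$ is the structural reason that functional admits no extremal metric.
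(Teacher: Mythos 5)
Your proposal is correct and, for the core of the theorem, follows essentially the same route as the paper: a Hadamard-type first variation of $\lambda_j(\cdot,\sigma)|\Sigma|^{2/k}$ (your $T_u$ is exactly the paper's $du\otimes du+\tfrac14\Delta_g(u^2)g$ after expanding $\Delta_g(u^2)$), a convex-combination balancing identity obtained from extremality, the trace trick turning $P-|\Sigma|_g^{-1}$ into a Neumann eigenfunction with the negative eigenvalue $-\tfrac{4\lambda_*}{k-2}$ (positivity of $\lambda_*$ coming from the negative Robin parameter $-\tan r$), hence $P\equiv|\Sigma|_g^{-1}$, then $\lambda_*=k$ by evaluating the interior identity on $\eta$, and finally the Takahashi/free-boundary verification. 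The one genuinely different ingredient is your treatment of $\Xi^-_{r,i}$: you rule out extremal metrics by the homothety computation $\Xi^-_{r,i}(\Sigma,cg)=\min\{\lambda_0(g,c^{1/2}\tanh r),\lambda_i(g,c^{1/2}\coth r)\}|\Sigma|_g^{2/k}$, which is strictly decreasing in $c$ because both Robin parameters are positive and $\partial_\sigma\lambda_j<0$; the paper instead derives the full balancing identity and observes that its boundary component $\sum_\alpha t_\alpha\sigma_\alpha u_\alpha^2\equiv 0$ on $\partial\Sigma$ is impossible when all $\sigma_\alpha>0$. Your scaling argument is more elementary (it needs only one test family and no Hahn--Banach step for that half), and it also transparently explains why the obstruction disappears for $\Xi^+_{r,i}$, where the two branches scale in opposite directions. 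Two small points to tighten: the assertion $\mu>0$ in the balancing identity deserves the one-line justification that otherwise $(0,0)$ would lie in the convex hull of $K_i$ alone, forcing $\sum_j\nu_j u_j^2\equiv0$ on $\partial\Sigma$ against unique continuation (this is also how the paper excludes the case $\lambda_0\neq\lambda_i$); and the passage from one-sided derivatives of the minimum to the existence of the balancing combination is exactly the compact-convex-hull/Hahn--Banach step of the paper's Lemma on vanishing derivatives, which you flag as the hard part but should carry out as in \cite{lima2023eigenvalue}.
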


Here $E_i(g,\sigma)$ is the $i$-th eigenspace of problem~\ref{eqn:robin general} with $\sigma$ given. Theorem~\ref{4.4} actually states that it is senseless to consider the functional  $\Xi_{r,i}^-(\Sigma,g)$ from the point of view of extremal metrics. However, considering the functional $\Xi_{r,i}^+(\Sigma,g)$ is meaningful. Particularly, one can be interested in finding maximal and minimal metrics for $\Xi_{r,i}^+(\Sigma,g)$. This leads us to the question of whether this functional is bounded. In Proposition~\ref{prop:bound} we show that both functionals $\Xi_{r,i}^+(\Sigma,g)$ and $\Xi_{r,i}^-(\Sigma,g)$ are bounded from above on the set of Riemannian metrics for any compact surface with boundary. For $\Xi_{r,1}^+(\Sigma,g)$, we were also able to find an explicit upper bound in the following theorem, which was inspired by Theorem A in~\cite{lima2023eigenvalue}
 \begin{theorem}\label{thm:bound}
 For any compact surface $\Sigma$ with boundary, one has
 $$
  \Xi_{r,1}^+(\Sigma,g)\leqslant 4\pi(1-\cos r)(\gamma+l),
 $$
 where $\gamma$ and $l$ are, respectively, the genus and the number of boundary components. If $\Sigma$ is a topological disk, then  
 $$
  \Xi_{r,1}^+(\Sigma,g)\leqslant 4\pi(1-\cos r),
  $$
  and the equality holds if and only if $\Sigma$ is a geodesic disk in $\mathbb B^2(r)\subset \mathbb S^2_+$.
 \end{theorem}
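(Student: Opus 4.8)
The plan is to follow the classical test-function strategy of Hersch–Yang–Yau as adapted by Lima–Menezes in \cite{lima2023eigenvalue}, now applied to the Steklov eigenvalue of frequency $k=1$ (so here $k$ denotes both the surface dimension and the frequency, and they coincide at $2$; more precisely I should say: the surface is $2$-dimensional, and I want to bound $\theta_1 = \sigma_1(g,-2)$-type quantity, using the structure of $\Xi^+_{r,1}$). First I recall that by Theorem~\ref{4.4} an extremal situation forces $\lambda_0(g,-\tan r)=\lambda_1(g,\cot r)$, but for a pure \emph{upper bound} I do not need extremality: I work directly with $\lambda_1(g,\cot r)$ and the variational characterization
$$
\lambda_1(g,\cot r)\,=\,\inf_{u\perp E_0}\ \frac{\int_\Sigma |\nabla u|_g^2\,\mathrm{dA} + \cot(r)\int_{\partial\Sigma} u^2\,\mathrm{ds}}{\int_\Sigma u^2\,\mathrm{dA}},
$$
where the orthogonality is against the ground state $E_0(g,-\tan r)$ of the companion problem; the coefficients $-\tan r$ and $\cot r$ are chosen precisely so that on the model geodesic disk in $\mathbb S^2_+$ the coordinate functions realize equality (this is the content of Section~\ref{subsec:coeff}).

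The key steps, in order. \textbf{(1)} Construct a conformal (i.e. holomorphic-type) branched cover $\varphi\colon(\Sigma,g)\to(\mathbb B^2(r),\mathrm{can})$ of degree $\gamma+l$ (respectively degree $1$ when $\Sigma$ is a disk), using the classical Ahlfors/Gabard-type existence result for proper holomorphic maps from a Riemann surface with boundary onto a disk, combined with the uniformization of $\mathbb B^2(r)$: a geodesic ball in $\mathbb S^2_+$ is conformally the flat disk, so "into $\mathbb B^2(r)$" means landing in the model metric $\mathrm{can}$ of constant curvature $+1$ with the prescribed totally-geodesic-free-boundary normalization. \textbf{(2)} Use the $4$-dimensional conformal group of the model ball together with a center-of-mass / Hersch balancing argument to precompose $\varphi$ with a conformal automorphism so that the four functions $(\psi_0,\psi_1,\psi_2,\psi_3)=\varphi$ (the Euclidean coordinates of $\mathbb S^2$ restricted to the image, suitably scaled by $\sin r,\cos r$) satisfy the balancing relations making each $\psi_j$, $j\ge 1$, admissible as a test function orthogonal to $E_0$; here the algebraic identities $\psi_0^2+\cdots+\psi_3^2=$ const and $\sum d\psi_j\otimes d\psi_j = $ (the model metric) are exactly the FBMI relations for the model. \textbf{(3)} Plug these balanced test functions into the Rayleigh quotient, sum over $j=1,2,3$, and use conformal invariance of the Dirichlet energy in dimension two to replace $\int_\Sigma|\nabla\psi_j|_g^2\,\mathrm{dA}$ by the corresponding integral with respect to the pulled-back model metric $\varphi^*\mathrm{can}$, which is controlled by $(\gamma+l)\cdot(\text{total energy of coordinates on }\mathbb B^2(r))$. \textbf{(4)} Assemble: the numerator becomes $\le (\gamma+l)\big(2|\mathbb B^2(r)|_{\mathrm{can}} + (\text{boundary term})\big)$ and the denominator is controlled below by $|\Sigma|_g$ via $\sum\psi_j^2\equiv$ const; the boundary terms combine with the $\cot r$, $-\tan r$ weights to produce exactly the geometric constant. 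A direct computation of $|\mathbb B^2(r)|_{\mathrm{can}} = 2\pi(1-\cos r)$ and of the boundary length then yields $\Xi^+_{r,1}(\Sigma,g)\le 4\pi(1-\cos r)(\gamma+l)$. \textbf{(5)} For the disk case, degree $1$ gives the bound $4\pi(1-\cos r)$; equality forces $\varphi$ to be a conformal diffeomorphism with all inequalities tight, which forces $g=\varphi^*\mathrm{can}$ up to the conformal factor being constant, i.e. $\Sigma$ is isometric to the geodesic disk in $\mathbb B^2(r)\subset\mathbb S^2_+$.

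The main obstacle I anticipate is \textbf{Step (2)}, the balancing: unlike the closed case, the weighted eigenvalue $\lambda_1(g,\cot r)$ requires test functions orthogonal not to constants but to the ground eigenspace $E_0(g,-\tan r)$ of a \emph{different} Robin problem, and one must check that the Hersch-type argument still applies — i.e. that the map "conformal automorphism $\mapsto$ center of mass of $\varphi$ against the ground state density" is continuous, odd on the boundary sphere of the ball of automorphisms, and hence has a zero. This is where the specific choice of coefficients $\tan r$, $\cot r$ is essential, and verifying it likely requires the identities from Section~\ref{subsec:coeff} together with a careful analysis of how $E_0(g,-\tan r)$ behaves under the construction. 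A secondary technical point is the existence of the degree-$(\gamma+l)$ proper conformal map in Step (1) with the correct boundary behaviour (proper, sending $\partial\Sigma$ to $\partial\mathbb B^2(r)$); this is standard for the Euclidean disk (Ahlfors, Gabard) but I would need to cite it carefully and transfer it through the conformal equivalence with the spherical cap. Everything else is a routine but bookkeeping-heavy computation of areas, lengths, and Rayleigh quotients on the model.
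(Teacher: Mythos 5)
Your overall strategy is the same as the paper's: a conformal map $(\Sigma,g)\to\mathbb B^2(r)\subset\mathbb S^2_+$ of degree controlled by $\gamma+l$ (Gabard), a Hersch balancing of the coordinate functions, Rayleigh quotients, conformal invariance of the Dirichlet energy to reduce to $\deg(u)\cdot|\mathbb B^2(r)|=\deg(u)\cdot 2\pi(1-\cos r)$, and a rigidity analysis forcing the conformal factor to be constant in the disk case. So the skeleton is right. (Minor slips: $\mathbb B^2(r)\subset\mathbb S^2\subset\mathbb R^3$ has \emph{three} coordinate functions $x_0,x_1,x_2$, not four, and the balancing uses the two-parameter family of M\"obius translations of the disk against the two conditions $\int_\Sigma u_i\phi\,dA_g=0$, $i=1,2$.)

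However, your central analytic step is set up incorrectly, and the error is not cosmetic. First, with the paper's convention $\partial u/\partial\eta=\sigma u$, the Rayleigh quotient for $\lambda_i(g,\sigma)$ carries the boundary term $-\sigma\int_{\partial\Sigma}u^2\,dL_g$, so for $\sigma=\cot r$ the numerator is $\int_\Sigma|\nabla u|^2-\cot(r)\int_{\partial\Sigma}u^2$, not $+\cot(r)\int_{\partial\Sigma}u^2$ as you wrote. Second, the orthogonality required to test $\lambda_1(g,\cot r)$ is against $E_0(g,\cot r)$, the ground state of the \emph{same} Robin problem, not against $E_0(g,-\tan r)$; the obstacle you single out as the main difficulty (balancing against the companion problem's ground state) therefore does not arise, and the Hersch argument is the standard one. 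Third, and most importantly, you cannot run the estimate through $\lambda_1(g,\cot r)$ alone: the test functions $u_1,u_2$ only control $\lambda_1(g,\cot r)\int_\Sigma(u_1^2+u_2^2)$, and $\int_\Sigma(u_1^2+u_2^2)=|\Sigma|_g-\int_\Sigma u_0^2$ falls short of $|\Sigma|_g$. The proof must use \emph{both} quotients: $u_0$ as a test function for $\lambda_0(g,-\tan r)$ (no orthogonality needed for the bottom eigenvalue) and $u_1,u_2$ for $\lambda_1(g,\cot r)$. Then the two boundary terms, $+\tan(r)\int_{\partial\Sigma}u_0^2$ and $-\cot(r)\int_{\partial\Sigma}(u_1^2+u_2^2)$, cancel exactly because $u_0=\cos r$ and $u_1^2+u_2^2=\sin^2 r$ on $\partial\Sigma$ (both equal $\sin r\cos r\,|\partial\Sigma|_g$), and the $\min$ in the definition of $\Xi^+_{r,1}$ is precisely what lets you write
$$\min\{\lambda_0(g,-\tan r),\lambda_1(g,\cot r)\}\,|\Sigma|_g\leqslant \lambda_0(g,-\tan r)\int_\Sigma u_0^2+\lambda_1(g,\cot r)\int_\Sigma(u_1^2+u_2^2)\leqslant \sum_{i=0}^2\int_\Sigma|\nabla u_i|^2.$$
Your step (4) gestures at a cancellation of the $\tan r$ and $\cot r$ weights, but your explicit setup contains only one of the two problems, so as written the boundary terms do not cancel and the bound does not close.
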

 However, in Proposition~\ref{prop:unbound}, we show that both functionals are not bounded from below even in any conformal class. These results imply that one can find maximal metrics for $\Xi_{r,i}^+(\Sigma,g)$ on $\mathcal R(\Sigma)$, but there are no minimal metrics for it on $\mathcal R(\Sigma)$. Moreover, the canonical metric on a geodesic disk in $\mathbb B^2(r)\subset \mathbb S^2_+$ is maximal for $\Xi_{r,1}^+(\Sigma,g)$ when $\Sigma$ is a topological disk. We conjecture, that the metric on the critical spherical catenoid in a geodesic ball in $\mathbb S^3_+$ (see~\cite{do1983rotation}) is maximal for $\Xi_{r,1}^+(\Sigma,g)$ on the topological annulus. There is also a candidate for a maximal metric for $\Xi_{r,1}^+(\Sigma,g)$ on the topological M\"obius band.  It would be interesting to find other maximal metrics. 
 
 In conclusion, we would like to emphasize that the main advantage of the functional $\Xi_{r,i}^+$ with respect to $\Theta_{r,i}$ is that we do not restrict the set of all Riemannian metrics on $\Sigma$ to the set of admissible metrics. Particularly, in Theorem~\ref{thm:bound}, we find a maximal metric for a topological disk on the set of all Riemannian metrics.

\subsection{Paper organization} This paper is organized as follows. Section~\ref{sphere} is dedicated to FBMI in geodesic balls in $\mathbb S^n_+$. More precisely, in Section \ref{sec:2.1}, we prove Theorem~\ref{2.2}. Here we also show Theorem~\ref{2.3}. In Section \ref{subsec:coeff}, we give a justification why the functional $\Theta_{r,i}$ is in the same form as the two-dimensional case given by \cite{lima2023eigenvalue}. In Section~\ref{sec:hyper} we prove similar theorems for the case of FBMI in geodesic balls in $\mathbb H^n$. In Section \ref{sec:alt functional}, we treat the functionals $\Xi^\pm_{r,i}$: Here we prove Theorems~\ref{4.4} and~\ref{thm:bound}.

\subsection{Acknowledgments} The authors would like to  thank Iosif Polterovich and Mikhail Kerpukhin for their comments on the preliminary version of the manuscript. The work of the second author is an output of a research project implemented as part of the Basic Research Program at the National Research University Higher School of Economics (HSE University).

\section{FBMI in spheres}\label{sphere}
\subsection{Critical metrics of sphere FBMI functional}
\label{sec:2.1}
We follow the notation conventions in \cite{lima2023eigenvalue,medvedev2025free}. Let $\Sigma^k$ be a $k$-dimensional compact manifold with (sufficiently smooth) boundary. Recall that $\mathcal{R}_{k}(\Sigma)$ denotes the subset of Riemannian metrics on $\Sigma$ such that the following PDE system has no non-zero solutions
\begin{equation*}\label{sys:dsphere}
\left\{ \begin{aligned} 
& -ku+ \Delta_g u =0\ &\mathrm{in}\ \Sigma,\\
&  u =0\ &\mathrm{on}\ \partial\Sigma.
\end{aligned} \right.
\end{equation*}
In other words, $\mathcal{R}_{k}(\Sigma)$ denotes the set of metrics on $\Sigma$, for which $k$ is not a Dirichlet eigenvalue. For $g\in\mathcal{R}_{k}(\Sigma)$, we consider the following linear PDE system with parameter $\theta$ 
\begin{equation}
\left\{ \begin{aligned} 
-ku+\Delta_g u=0\ &\mathrm{in}\ \Sigma,\\
\dfrac{\partial u}{\partial \eta}=\theta u\ &\mathrm{on}\ \partial\Sigma,
\end{aligned} \right.
\end{equation}
where $\eta$ in the unit outer normal to $\partial\Sigma$ in $(\Sigma,g)$. The Steklov eigenvalues of $g$ with frequency $k$ are real numbers $\theta$, for which the above system has non-trivial solutions. Arrange these eigenvalues in increasing order, counted with multiplicity, by $\theta_i$ for $i\geqslant 0$. Let $V_i(g)$ be the space of eigenfunctions corresponding to $\theta_i$. It is known that $( \theta_i)_{i\in\mathbb N}$ is discrete with $\theta_i\rightarrow +\infty$, as $i\to\infty$ and $\theta_0$ is simple, when $\Sigma$ is connected.\\
\\
For any $0<r<\dfrac{\pi}{2}$ and $i\geqslant 1$, let $\Theta_{r,i}$ be the following functional on $\mathcal{R}_{k}(\Sigma)$ as in \cite{medvedev2025free}
\begin{align}
\label{eqn:sphere functional}
\Theta_{r,i}(\Sigma,g) := (\theta_0(g)\cos^2 r+\theta_i(g)\sin^2 r)\vert\partial \Sigma\vert_g + 2\vert \Sigma\vert_g.
\end{align} 
This is exactly the same functional as in \cite{lima2023eigenvalue}, for which the critical metrics induce FBMI into unit spherical caps by Laplace eigenfunctions when $k=2$. We will see that this functional also serves the same purpose, when $k>2$.\\

The first problem arising in finding extremal metrics of this functional is that the eigenvalues $\theta_i(g_t)$ can be non-differentiable even for a smooth family of metrics $(g_t)_{t\in[-\varepsilon,\varepsilon]}$. Fix any $\varepsilon>0$ and let $(g_t)_{ [-\varepsilon,\varepsilon]}\in \mathcal{R}_{k}(\Sigma)$ be a smooth family of admissble metrics on $\Sigma$. Then each function $t\mapsto\theta_i(g_t)$ is Lipschitz in $t$ (see e.g. \cite[Claim 4]{medvedev2025free}). Thus, the functional $\Theta_{r,i}(g(t))$ is differentiable almost everywhere. We compute an explicit form of the derivative of $\Theta_{r,i}(g_t)$, which is well-defined for almost every $t$, following the approach in \cite{lima2023eigenvalue}. Let $g^b_t$ be the restriction of $g_t$ to $\partial \Sigma$, and denote $dA_t$ and $dL_t$ the volume density, induced by $g_t$ and $g^b_t$ on $\Sigma$ and $\partial\Sigma$, respectively. When $\theta_i(g_t)$ is differentiable, its derivative is given by
\begin{align*}
\dfrac{d}{dt}\left(\theta_i(g_t) \right)= & \int_{\Sigma} \left\langle \dfrac{1}{2}\left(-k u_i^2(t)+ \vert \nabla^t u_i(t) \vert_{g_t}^2 \right)g_t- du_i(t)\otimes du_i(t), \dot{g}(t) \right\rangle_{g_t} dA_t\\
& -\theta_i(g_t)\int_{\partial \Sigma} \dfrac{1}{2} u_i^2(t) \left\langle g^b_t,\dot{g}^b_t \right\rangle_t dL_t,
\end{align*}
where $u_i(t)$ is some eigenfunction of $\theta_i(g_t)$ such that $\Vert u_i(t)\Vert_{L^2(\partial \Sigma, dL_t)}=1$.
Therefore, the weak derivative of $\Theta_{r,i}(g_t)$ is given  by
\begin{equation}
\label{sphere functioanl dev}
\begin{aligned}
 Q_r^s(u_0(t),u_i(t)):= & \dfrac{d}{dt}\left( \Theta_{r,i}(g_t) \right)\\
 =& \int_{\Sigma} \left\langle \tau^k_t(\cos  r \cdot u_0(t))+\tau^k_t(\sin r \cdot u_i(t))+ g,\dot{g}_t \right\rangle_t dA_t \\
& +\int_{\partial\Sigma} F^k_t(\cos r \cdot u_0(t),\sin r\cdot  u_i(t))\left\langle g^b, \dot{g}^b \right\rangle_t dL_t.
\end{aligned}
\end{equation}
Here the functions $\tau^k_t$ and $F^k$ are defined analogously as in \cite{lima2023eigenvalue} by
\begin{align*}
 \tau^k(u) =&\vert \partial\Sigma_t\vert\left( \dfrac{1}{2}\left(-ku^2+\vert \nabla^t u\vert_{g_t}^2\right)g-du\otimes du  \right),\\
F^k(v_1,v_2) =&\dfrac{1}{2} \biggr( \theta_0\left(\cos^2 r -\vert \partial\Sigma_{g_t}\vert v_1^2 \right)+ \theta_i\left(\sin^2 r -\vert \partial\Sigma_{g_t}\vert v_2^2 \right)\biggr ).
\end{align*}

We want to recall several facts from \cite{lima2023eigenvalue}. The weak derivative $\dfrac{d}{dt}\left( \Theta_{r,i}(g_t) \right)$ given by \eqref{sphere functioanl dev} is a sum of two integrals. We call the integral on $\Sigma$ the interior component and the integral on $\partial\Sigma$ the boundary component. We can rewrite the formula \eqref{sphere functioanl dev} as the following inner product on the space $L^2(S^2(\Sigma,g))\times L^2(\partial\Sigma,g^b)$
\begin{align*}
\biggr\langle  \left(\dot{g}_t,\langle g^b,\dot{g}^b \rangle_t  \right) , \left(  \tau^k_t(\cos  r \cdot u_0(t))+\tau^k_t(\sin r \cdot u_i(t))+ g,   F^k_t(\cos r \cdot u_0(t),\sin r\cdot  u_i(t))                  \right)   \biggr\rangle;
\end{align*}
see \cite[section 5]{lima2023eigenvalue} for details. Also, note that $\theta_0(t)$ is a simple eigenvalue for connected $\Sigma^k$, so we can assume that $u_0(t)$ has a unique choice for each $t$. Thus, for a fixed smooth family of metrics $(g_t)_{t\in[-\varepsilon,\varepsilon]}$, for each given $t$, the above inner product is actually a function of $u_i^2(t)$. 

Although $\Theta_{r,i}(g_t)$ is not always differentiable, for a smooth family of metrics $(g_t)_{t\in[-\varepsilon,\varepsilon]}$, the functional $Q^s_r(u_0(t),u_i(t))$ given by the formula of \eqref{sphere functioanl dev} is always defined. Now suppose $g$ is an extremal metric for $\Theta_{r,i}$. Let $(g_t)_{t\in[-\varepsilon,\varepsilon]}$ be a smooth family such that $\Theta_{r,i}(g_t)\leqslant \Theta_{r,i}(g)+o(t)$ with $g_0=g$. Since $\Theta_{r,i}(g_t)$ is absolute continuous, by the Lebesgue fundamental theorem of calculus, we see there exist $t_j^+\rightarrow 0^+$ and $t_j^-\rightarrow 0^-$ such that the $L^2(\partial\Sigma,g^b(t_j^{\pm}))$-normalized eigenfunctions $u_i(t_j^{\pm})$ of $\theta_i(t_j^{\pm})$ satisfy
\begin{align*}
\limsup_{j\rightarrow\infty}  Q^s_r(u_0(t_j^+),u_i(t_j^+))\leqslant 0,\ \liminf_{j\rightarrow\infty}  Q^s_r(u_0(t_j^-),u_i(t_j^-))\geqslant 0.
\end{align*}
An analogous statement also holds for the case, when  $\Theta_{r,i}(g_t)\geqslant \Theta_{r,i}(g)+o(t)$. Therefore, we can argue as in \cite[lemma 3]{lima2023eigenvalue} to prove exactly the same statement as follows
\begin{lemma}
\label{lemma:sphere vanish derivative}
Suppose $g$ is an extremal metric of $\Theta_{r,i}$. For any $(h,\psi)\in L^2(S^2(\Sigma,g))\times L^2(\partial\Sigma,g^b)$, there exists $u_i$, which is an eigenfunction corresponding to $\theta_i$ with $\Vert u_i \Vert_{L^2(g^b)}  =1$ such that
\begin{align*}
\biggr\langle  \left( h,\psi  \right) , \left(  \tau^k_t(\cos  r \cdot u_0(t))+\tau^k_t(\sin r \cdot u_i(t))+ g,   F^k_t(\cos r \cdot u_0(t),\sin r\cdot  u_i(t))                  \right)   \biggr\rangle=0.
\end{align*}
\end{lemma}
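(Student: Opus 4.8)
The plan is to follow the strategy of \cite[Lemma 3]{lima2023eigenvalue}, adapted to the frequency-$k$ Steklov setting. The starting point is the two one-sided statements already recorded just above the lemma: if $g$ is extremal for $\Theta_{r,i}$, then there are sequences $t_j^\pm\to 0^\pm$ and $L^2(\partial\Sigma,g^b(t_j^\pm))$-normalized eigenfunctions $u_i(t_j^\pm)$ for which $\limsup_j Q_r^s(u_0(t_j^+),u_i(t_j^+))\le 0$ and $\liminf_j Q_r^s(u_0(t_j^-),u_i(t_j^-))\ge 0$. Fix an arbitrary test pair $(h,\psi)\in L^2(S^2(\Sigma,g))\times L^2(\partial\Sigma,g^b)$, and let $(g_t)$ be a smooth family with $g_0=g$ whose velocity $(\dot g_0,\langle g^b,\dot g^b\rangle_0)$ realizes $(h,\psi)$; such a family exists because the map from metric variations to $(\dot g,\langle g^b,\dot g^b\rangle)$ at $t=0$ is surjective. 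The idea is then to extract, from the $u_i(t_j^\pm)$, weak limits that are honest $\theta_i(g)$-eigenfunctions and to pass the one-sided inequalities to the limit to squeeze the inner product to $0$.

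The key analytic steps, in order, are as follows. First, establish compactness: since each $u_i(t_j^\pm)$ is $L^2(\partial\Sigma)$-normalized and solves $-k u + \Delta_{g_{t_j^\pm}} u = 0$ with $\partial_\eta u = \theta_i(t_j^\pm)u$, and since $\theta_i(t_j^\pm)\to\theta_i(g)$ by the Lipschitz continuity cited from \cite[Claim 4]{medvedev2025free}, elliptic estimates give a uniform $H^1(\Sigma)$ bound; pass to a subsequence so that $u_i(t_j^\pm)\rightharpoonup u_i^\pm$ weakly in $H^1$ and strongly in $L^2(\Sigma)$ and in $L^2(\partial\Sigma)$ (the latter via the compact trace embedding, so the limits remain $L^2(\partial\Sigma)$-normalized and hence nonzero). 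Second, identify the limits: passing to the limit in the weak formulation of the PDE shows $u_i^\pm\in V_i(g)$. Third, pass to the limit in $Q_r^s$: because $\tau^k$ and $F^k$ depend on $u$ only through $u^2$, $|\nabla u|^2$, $du\otimes du$, and because $u_0(t)$ is the unique normalized ground-state eigenfunction and depends continuously on $t$ (simplicity of $\theta_0$ on connected $\Sigma$), the interior integrand converges except for the quadratic gradient terms in $u_i$, which converge only weakly. Here one uses the standard trick: write $Q_r^s(u_0(t_j^\pm),u_i(t_j^\pm))$ as the inner product against the fixed pair $(h,\psi)$, and note that the only obstruction to strong convergence, the term $\int_\Sigma (\tfrac12|\nabla u_i|^2 g - du_i\otimes du_i)\cdot h$, is handled because testing the eigenvalue equation for $u_i(t_j^\pm)$ against $u_i(t_j^\pm)$ itself gives $\int_\Sigma |\nabla u_i(t_j^\pm)|^2 = k\int_\Sigma u_i(t_j^\pm)^2 + \theta_i(t_j^\pm)\int_{\partial\Sigma}u_i(t_j^\pm)^2$, whose right-hand side converges strongly; combined with weak $H^1$ convergence this forces $\|\nabla u_i(t_j^\pm)\|_{L^2}\to\|\nabla u_i^\pm\|_{L^2}$, upgrading weak to strong $H^1$ convergence. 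Consequently $Q_r^s(u_0(t_j^\pm),u_i(t_j^\pm))\to Q_r^s(u_0,u_i^\pm)$, so $Q_r^s(u_0,u_i^+)\le 0\le Q_r^s(u_0,u_i^-)$.

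To finish, run a connectedness/convexity argument on the eigenspace $V_i(g)$ exactly as in \cite[Lemma 3]{lima2023eigenvalue}: the map $u\mapsto Q_r^s(u_0,u)$ restricted to the $L^2(\partial\Sigma)$-unit sphere of $V_i(g)$ is continuous, its value depends only on the quadratic form $u^2$ on $\partial\Sigma$ and the quadratic expressions in $u$ on $\Sigma$, and the unit sphere of $V_i(g)$ is connected (as $\dim V_i\ge 1$; if $\dim V_i=1$ one replaces this by a direct sign argument, since then $u_i^+$ and $u_i^-$ differ only by sign and $Q_r^s$ is even in $u_i$, forcing the value to be $0$). By the intermediate value theorem along a path in the unit sphere joining $u_i^+$ to $u_i^-$, there is some $u_i\in V_i(g)$ with $\|u_i\|_{L^2(g^b)}=1$ and $Q_r^s(u_0,u_i)=0$, which is precisely the asserted identity. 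Since $(h,\psi)$ was arbitrary, the lemma follows.

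The main obstacle is the third step: ensuring that the one-sided inequalities genuinely pass to the limit, which requires upgrading the weak $H^1$ convergence of the $u_i(t_j^\pm)$ to strong convergence so that the gradient-quadratic terms in $\tau^k$ and $F^k$ behave continuously. This hinges on the energy identity obtained by testing the frequency-$k$ eigenvalue equation against the eigenfunction itself, together with the convergence $\theta_i(t_j^\pm)\to\theta_i(g)$ and the strong $L^2(\partial\Sigma)$ convergence of traces; I expect the rest (elliptic bounds, identification of limits, the connectedness argument) to be routine transcriptions of the two-dimensional proof, with $k$ playing a purely cosmetic role in the PDE.
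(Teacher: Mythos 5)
Your proposal follows exactly the route the paper intends: the paper's own ``proof'' of this lemma is a one-line deferral to Lemma~3 of Lima--Menezes, and what you have written is a faithful reconstruction of that argument (one-sided sequences $t_j^\pm$ from the Lebesgue differentiation of the Lipschitz function $\Theta_{r,i}(g_t)$, compactness of the normalized eigenfunctions, passage to the limit in $Q_r^s$, and a connectedness/intermediate-value argument on the unit sphere of $V_i(g)$). The analytic core is right, and in particular you correctly identify the one non-trivial point --- upgrading weak $H^1$ convergence of $u_i(t_j^\pm)$ to strong convergence via the energy identity $\int_\Sigma|\nabla u|^2=k\int_\Sigma u^2+\theta_i\int_{\partial\Sigma}u^2$ together with $\theta_i(t_j^\pm)\to\theta_i(g)$ and Rellich/trace compactness --- which is exactly what makes the gradient-quadratic terms in $\tau^k$ behave continuously.

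One claim in your first step is false as stated and needs the standard patch: the map from metric variations to pairs $(\dot g,\langle g^b,\dot g^b\rangle)$ is \emph{not} surjective onto $L^2(S^2(\Sigma,g))\times L^2(\partial\Sigma,g^b)$, since the boundary component is entirely determined by the trace of $\dot g$ on $T\partial\Sigma$ (e.g.\ the pair $(0,\psi)$ with $\psi\neq 0$ is not realizable). The image is, however, dense: given $(h,\psi)$ one modifies a smooth approximant of $h$ in a collar of shrinking width so as to prescribe the boundary pairing while perturbing the interior $L^2$ pairing negligibly. You then prove the identity for a dense set of test pairs and extend to all $(h,\psi)$ by continuity, using that the corresponding $u_i$'s live on the compact unit sphere of the finite-dimensional space $V_i(g)$ and that $T(u_0,\cdot)$ is bounded there, so a subsequential limit of the $u_i$'s works for the limiting pair. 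With that correction the argument is complete and coincides with the paper's (i.e.\ Lima--Menezes's) proof.
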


Now we can prove Theorem \ref{2.2}, which extends the results by Lima and Menezes in \cite{lima2023eigenvalue} and the second author in~\cite{medvedev2025free} to the higher dimensional case.

\begin{proof}[Proof of Theorem \ref{2.2}]
We begin by proving the first statement of the theorem.
For fixed $l\geqslant 1$, denote $V_l(g)$ the eigenspace corresponding to the eigenvalue $\theta_l$ for $g\in \mathcal{R}_{k}(\Sigma)$ and $l=0\ \mathrm{or} \ i$. For any $g\in\mathcal{R}_{k}(\Sigma)$, we define the following subset of $ L^2(S^2(\Sigma),g)\times L^2(\partial\Sigma,g^b)$
\begin{align*}
\mathcal{C}_r:=\biggr\lbrace & \left(\tau^k (\cos r u_0)+\tau^k(\sin r u_i)+  g,\ F^k(\cos r \cdot u_0,\sin  r \cdot u_i) \right): \\
 &  \ u_i\in V_i(g),\ L^2(u_l,g^b)=1 \biggr\rbrace.
\end{align*}
This subset is actually obtained by the formula of the weak derivative of $\Theta_{r,i}(g_t)$ with $g(0)=g$. If $g\in\mathcal{R}_{k}(\Sigma)$ is a critical point of the functional $\Theta_{r,i}$, the point $(0,0) $ is in the convex hull of the set $\mathcal{C}_r(g)$ by the geometric Hahn-Banach theorem (see \cite[pp. 17]{lima2023eigenvalue}). For simplicity, denote $a=|\partial \Sigma|_g$ and $A=|\Sigma|_g$. We can find $L^2(\partial \Sigma,g) $-normalized functions $u_0\in V_0(g)$ and $u_1,\ldots,u_m\in V_i(g)$ together with constants $t_j>0$ and $\displaystyle\sum_{j=1}^m t_j=1$  such that
\begin{eqnarray}
\label{eqn:boundary 1}
&\displaystyle \sum_{j=1}^m t_j \left[ \theta_0 \cos^2 r \left( 1- au_0^2 \right)   +\theta_i \sin^2 r \left(1-a u_j^2 \right) \right]=0\ \mathrm{on}\ \partial \Sigma.
\end{eqnarray}
Denote the rescaled functions by
\begin{align}
\label{eqn:eigenfunction rescaled}
 v_0= \sqrt{a}\cos r \cdot u_0,\ v_j= \sqrt{a t_j}\sin r \cdot u_j,\ \mathrm{for}\ 1\leqslant j\leqslant m.
 \end{align} and define $f=\displaystyle\sum_{j=0}^m v_j^2-1$. Notice that \textit{a priori} $f$ is not identically zero on $\partial\Sigma$. However, it is easy to verify that $\displaystyle\int_{\partial\Sigma}f\,dL_g=0$. Now let $\eta$ be the outward-pointing unit normal vector field on $\partial \Sigma$, we have from \eqref{eqn:boundary 1} that 
\begin{align}
\label{eqn:boundary normal derivative}
\dfrac{\partial f}{\partial \eta}=2(\theta_0\cos^2 r+\theta_i \sin^2 r)
\end{align}
is a constant.\\

The interior component of the weak derivative gives
\begin{align}
\label{eqn:interior 1}
g+\dfrac{1}{2}\sum_{j=0}^m \left(-k v_j^2+\vert \nabla^g v_j\vert^2 \right) g-\sum_{j=0}^m dv_j\otimes dv_j=0.
\end{align}
Raising the indices using $g^{-1}$, and taking the trace of above equation yields
\begin{align}
\label{eqn:trace}
k+\left(\dfrac{k}{2}\sum_{j=0}^m\vert \nabla^g v_j\vert^2-\dfrac{k^2}{2}\sum_{j=0}^m v_j^2-\sum_{j=0}^m \vert\nabla^g v_j\vert^2  \right)=0.
\end{align}
Using Equation \eqref{eqn:trace}, we get
\begin{align*}
\Delta_g f=\dfrac{-4k}{k-2}f.
\end{align*}
Note that $\lambda:=\dfrac{-4k}{k-2}<0$ for all $k>2$. Then using the divergence theorem, we get
\begin{align*}
0 \leqslant -\lambda \int_{\Sigma}f^2\,dA_g+ \int_{\Sigma}\vert \nabla^g f\vert_g^2 dA_g&=\int_{\Sigma} (-\Delta_g f\cdot f + \vert \nabla^g f\vert_g^2 )dA_g\\= &\int_{\partial\Sigma} f\cdot  \dfrac{\partial f}{\partial \eta}dL_g= \dfrac{\partial f}{\partial \eta}\int_{\partial\Sigma} f dL_g=0.
\end{align*}
This is only possible, when $f\equiv 0$ on $\Sigma$, i.e., $\displaystyle\sum_{j=0}^m v_j^2=1$ on $\Sigma$. Substituting this back to \eqref{eqn:interior 1} and \eqref{eqn:trace}, we see that one has
$$
\sum_{j=0}^m \vert\nabla^g v_j\vert^2=k \quad \text{and} \quad g=\sum_{j=0}^m dv_j\otimes dv_j.
$$
Therefore, the map
\begin{align*}
\Phi:\Sigma\rightarrow \mathbb{R}^{m+1},\ \Phi(x)=(v_0(x),v_1(x),\ldots,v_m(x))
\end{align*}
is an isometric immersion into $\mathbb{S}^m$.
Since $\dfrac{\partial f}{\partial \eta}=0$ on $\partial\Sigma$, it follows trivially from the steps of the proof of Theorem B in \cite{lima2023eigenvalue}, that $v_0=\cos r$ on $\partial\Sigma$, $\theta_0=-\tan r$ and $\theta_i=\cot r$. This implies $V$ is a FBMI of $\Sigma$ into $\mathbb B^m(r)$.\\

Now we examine the extremal metrics for the functional $\Theta_{r,i}$ the set of conformal admissible metrics for a given $i\geqslant 1$ as before. Let $g\in \mathcal R_{k}(\Sigma)$ be a  Riemannian metric such that $\Theta_{r,i}(g)$ is extremal for the conformal class $[g]\cap\mathcal R_{k}(\Sigma)$. We claim that similarly to the 2-dimensional case in \cite{lima2023eigenvalue,medvedev2025free}, there exists frequency $k$ Steklov eigenfunctions inducing a free boundary harmonic map into $\mathbb B^m(r)$.

Since $g$ is extremal for $\Theta_{r,i}$,  we obtain the following result, analogous to the proof of $2$-dimensional case in \cite{lima2023eigenvalue}. For any positive functions $h\in L^2(\Sigma,g)$ and $h_b\in L^2(\partial\Sigma,g)$ we can find $L^2(\partial\Sigma,g)$-normalized functions $u_0\in V_0(g)$ and $u_i\in V_i(g)$ such that 
\begin{align}
\label{eqn: conformal boundary der}
\int_{\partial\Sigma} h_b \left(\theta_0 \cos^2r \left(1-a u_0^2\right) +\theta_i \sin^2 r \left(1-a u_i^2\right) \right)dL_g=0
\end{align}
and also 
 \begin{align}
\label{eqn: conforaml interior der}
 \resizebox{0.90\textwidth}{!}{$\int_{\Sigma} h \left( a \left( \cos^2 r \left(-\dfrac{k^2}{2}u_0^2+\left(\dfrac{k}{2}-1\right)\vert \nabla^g u_0\vert^2 \right) + \sin^2 r \left( -\dfrac{k^2}{2}u_i^2+\left(\dfrac{k}{2}-1\right)\vert \nabla^g u_i\vert^2 \right) \right) +k \right)dA_g=0.$}
\end{align} 
Therefore, similarly to the proof of FBMI case above, Equation \eqref{eqn: conformal boundary der} implies there exists  $L^2(\partial\Sigma,g)$-normalized eigenfunctions $u_0\in V_0(g)$ and $u_j\in V_i(g)$ for $1\leqslant j\leqslant m $ together with $t_j>0$ with $\sum_j t_j=1$ such that \eqref{eqn:boundary 1} holds.  Rescale these eigenfunctions as in \eqref{eqn:eigenfunction rescaled} to obtain $v_0,v_1,\ldots,v_m$. For $f=\displaystyle\sum_{j=0}^m v_j^2-1$, we also obtain  \eqref{eqn:boundary normal derivative} as in the FBMI case.
Moreover, it follows from equation \eqref{eqn: conforaml interior der} that \eqref{eqn:trace} also holds. Therefore, we can also deduce that $\displaystyle\sum_{j=0}^m v_j^2=1$. We see that map $\Phi: \Sigma\rightarrow \mathbb{R}^{m+1}$ defined by $x\mapsto (v_0(x),v_1(x),\ldots,v_m(x))$ has image inside $\mathbb{S}^m$. Finally, combining equations \eqref{eqn:boundary 1} and $\dfrac{\partial f}{\partial \eta}=0$ on $\partial\Sigma$ together, we have $v_0=\cos r$ on $\partial\Sigma$, see also proof of Theorem B in \cite{lima2023eigenvalue}.
\end{proof}

We now prove Theorem \ref{2.3} that serves as a converse statement to Theorem \ref{2.2}, under an additional assumption. Different choices of normalized $u_j\in V_i(g)$ in \eqref{sphere functioanl dev} give rise to (one-sided) derivatives $\theta_i(g_t)$ at $t=0$. Therefore, to obtain information on the one-sided derivatives of $\theta_i$ at $t=0$ that is possibly non-differentiable, we make the following assumption: \textit{For some given fixed $i\geqslant 1$ and Riemannian metric $g$, we either have $\theta_i<\theta_{i+1}$ or $\theta_i>\theta_{i-1}$}.

\begin{proof}[Proof of Theorem \ref{2.3}]
We prove the first statement following the steps in \cite[prop. 3.3]{medvedev2025free} as the second statement is totally analogous. Applying the normal derivative $\partial\eta$ to the equation $$\displaystyle\sum_{j=0}^m v_j^2=1,$$ we obtain
\begin{align*}
\theta_0 v_0^2+\theta_i(1-v_0^2)=0\ \mathrm{on}\ \partial\Sigma.
\end{align*}
Since $\theta_i-\theta_0>0$, we see that $v_0^2$ is a constant on $\partial\Sigma$. If we have $v_0^2=0$ on $\partial\Sigma$, the equation above implies $\theta_i=0$. In addition, the fact that $g\in\mathcal{R}_{k}(\Sigma)$ gives $v_0\equiv 0$ on $\Sigma$ and $\theta_0=0$. This leads to a contradiction. Similarly, we obtain that $\displaystyle\sum_{j=1}^m v_j^2$ is a non-zero constant on $\partial\Sigma$. Therefore, there is some $0<r<\dfrac{\pi}{2}$ such that 
\begin{align*} 
v_0^2=\cos^2 r,\ \sum_{j=1}^m v_j^2=\sin^2 r\ \mathrm{on}\ \partial\Sigma.
\end{align*}
Now rescale these functions, we obtain $L^2(\partial\Sigma,g^b)$-normalized eigenfunctions $u_0,u_1,\ldots,u_m$, and set the constants 
\begin{align*}
t_j=\dfrac{1}{a\sin^2 r}\Vert v_j\Vert^2_{L^2(\partial\Sigma,g^b)},\ \forall 1\leqslant j\leqslant m.
\end{align*}
We see that $t_j>0$ and $\displaystyle\sum_{j=1}^m t_j=1$.
Analogously to \cite[prop. 3.3]{medvedev2025free}, we have
\begin{align}
\sum_{j=1}^m t_j Q_r^s(u_0,u_j)=0.
\end{align}
Thus, in summation above, there are non-negative and non-positive terms of the form $Q_r^s(u_0,u_j)$. Note that $\theta_0(g_t)$ is always simple. First, suppose $\theta_i (g_t) $ is the smallest eigenvalue for all $\theta_l(g_t)$ with $\theta_l(g_0)=\theta_i(g_0)$. Since $\sin^2 r>0$, there exists $Q_r^s(u_0,u_j^+)\geqslant 0$ implies 
\begin{align*}
\lim_{t\rightarrow 0^-} \dfrac{\Theta_{r,i}(g_t)-\Theta_{r,i}(g_0)}{t}\geqslant Q_r^s(u_0,u_j^+)\geqslant 0.
\end{align*}
Similarly, for $Q_r^s(u_0,u_j^-)\leqslant 0$, we obtain
\begin{align*}
\lim_{t\rightarrow 0^+} \dfrac{\Theta_{r,i}(g_t)-\Theta_{r,i}(g_0)}{t}\leqslant Q_r^s(u_0,u_j^+)\leqslant 0
\end{align*}
Thus we see $g$ is an extremal metric for the functional $\Theta_{r,i}$. Similarly, if $\theta_i(g_t)$ is the largest eigenvalue for all $\theta_j(g_t)$ with $\theta_j(g)=\theta_i(g)$, we can show that $g$ is also extremal for $\Theta_{r,i}$. This completes the proof.
\end{proof}
\subsection{Coefficients of the sphere FBMI functional}\label{subsec:coeff}
We provide a justification why the functional defined in \eqref{eqn:sphere functional} is of the same form as the functional in \cite{lima2023eigenvalue} for the 2-dimensional case. One natural question is whether there are functionals in a similar form that give rise to critical metrics, induced by FBMI in a geodesic ball in the standard sphere? We consider the functionals in the following form:
\begin{align}
\label{eqn: sphere gen functional guess}
\Theta_r(\Sigma,g)=(\cos^2 r\theta_0+\sin^2 r\theta_1)\alpha_1 a^{\beta_1}+\alpha_2 A^{\beta_2}
\end{align}
Here $a=\vert \partial\Sigma\vert_g$ and $A=\vert\Sigma\vert_g$ for simplicity, and $\alpha_1,\alpha_2,\beta_1,\beta_2$ are given constants. Note that if we set $i=1$, then \eqref{eqn:sphere functional} belongs to the class of functionals given by \eqref{eqn: sphere gen functional guess}. We would like to investigate, whether there are other choices of the coefficients $\alpha_l,\beta_l$ such that the functional in the form of \eqref{eqn: sphere gen functional guess} would have critical metrics, induced by FBMI in a geodesic ball in the standard sphere.\\

We want the critical points of \eqref{eqn: sphere gen functional guess} to include all the metrics whose eigenfunctions corresponding to the Steklov eigenvalues $\theta_0,\theta_1$ (of frequency $k$) induce FBMI embeddings into unit spherical caps. Let $\mathbb B^k(r)$ be the spherical cap in $\mathbb{S}^k$ at $(1,0,\ldots,0)$. The coordinate functions are spherical harmonics of degree $1$. Therefore, $\Delta_g x_i-k x_i=0$, for all $i=0,\ldots,k$. Regarding the Steklov eigenvalues and eigenfunctions, theorem 3 of \cite{lima2023eigenvalue} and its proof for $2$-dimensional spherical caps can be generalized to higher dimensions, and the detailed steps are given by the second author in the proof of proposition 5.6 of \cite{medvedev2025free}. We collect the results and obtain the following.
\begin{proposition}
\label{claim:sphere eigen}
 For the spherical cap $\mathbb B^k(r)\subset\mathbb{S}^k$ centered at  $(1,0,\ldots,0)$ with $r<\dfrac{\pi}{2}$, the eigenvalue $\theta_0=-\tan r$ has eigenfunction $x_0$, and the eigenvalue $\theta_1=\cot r$ has eigenspace generated by $x_1,\ldots,x_k$. For a geodesic ball $\mathbb{B}^k(r)\subset \mathbb{H}^k$ centered at $(1,0,\ldots,0)$, the eigenvalue $\omega_0=\tanh (r)$ has eigenspace generated by $x_0$, and $\omega_1=\coth (r)$ has eigenspace generated by $x_1,\ldots,x_k$.
\end{proposition}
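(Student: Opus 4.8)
The plan is to verify Proposition~\ref{claim:sphere eigen} directly by computing the relevant eigenfunctions and Steklov eigenvalues for the model geodesic balls, reducing everything to an explicit one-variable ODE computation. First I would set up coordinates: write $\mathbb S^k\subset\mathbb E^{k+1}$ with coordinates $(x_0,x_1,\dots,x_k)$, so the geodesic ball $\mathbb B^k(r)$ centered at $(1,0,\dots,0)$ is $\{x_0>\cos r\}$, with boundary the geodesic sphere $\{x_0=\cos r\}$. It is classical (and used already in the paper's discussion of FBMI in constant-curvature balls) that each coordinate function $x_i$ is a first-order spherical harmonic, hence satisfies $\Delta_{\mathrm{can}} x_i = k\,x_i$, i.e. $-k x_i + \Delta_g x_i = 0$ with the paper's sign convention $\Delta_g = -\operatorname{div}_g\circ\nabla^g$. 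So all the $x_i$ are admissible solutions of the interior equation in system~(2.2); what remains is to check the Robin boundary condition $\partial_\eta x_i = \theta\, x_i$ on $\{x_0=\cos r\}$ and to read off which $\theta$ occurs and what the eigenspaces are.

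Next I would compute the outward unit normal $\eta$ along the boundary geodesic sphere and evaluate $\partial_\eta x_i$. The outward normal to $\{x_0 = \cos r\}$ inside the ball points in the direction of decreasing $x_0$; parametrizing along the geodesic from the center, at geodesic distance $\rho$ from $(1,0,\dots,0)$ one has $x_0 = \cos\rho$, so on the boundary $\rho=r$. A short computation with the gradient of $x_0$ restricted to $\mathbb S^k$ gives $\partial_\eta x_0 = \sin r$ on the boundary while $x_0 = \cos r$ there, so $\partial_\eta x_0 = (\tan r)\, x_0$; under the paper's convention for $\theta$ in~(2.2) this is the eigenvalue $\theta_0 = -\tan r$, with eigenfunction $x_0$ (negative because $\eta$ is the inward-relative-to-the-level-set direction — I would be careful here to match exactly the sign convention in Section~\ref{sec:2.1}, since this is precisely where the ``$-\tan r$'' versus ``$+\tanh r$'' bookkeeping matters). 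For $i\geqslant 1$, on the boundary the function $x_i$ together with its normal derivative is governed by the transverse behaviour of a degree-one spherical harmonic: writing $x_i = \sin\rho\cdot(\text{harmonic on }\mathbb S^{k-1})$, one gets $\partial_\rho x_i = \cos\rho \cdot(\ldots)$, so along $\rho=r$, $\partial_\eta x_i = -\cot r \cdot x_i$ relative to the chosen normal, which in the paper's sign convention is $\theta = \cot r$; hence $\theta_1 = \cot r$ with eigenspace $\operatorname{span}\{x_1,\dots,x_k\}$. I would then argue these are genuinely the zeroth and first eigenvalues: $\theta_0$ is simple and corresponds to the function that does not change sign (here $x_0>0$ on the ball), and by the variational characterization of $\theta_i$ the $k$-dimensional space $\operatorname{span}\{x_1,\dots,x_k\}$, whose elements are $L^2(\partial\Sigma)$-orthogonal to $x_0|_{\partial\Sigma}$ by symmetry, realizes the next eigenvalue with the claimed multiplicity; here I would cite that the detailed verification of ordering and multiplicity is exactly \cite[Theorem 3]{lima2023eigenvalue} in dimension two and \cite[Proposition 5.6]{medvedev2025free} in general, as the statement of the proposition already advertises.

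Finally, the hyperbolic case is entirely parallel: realize $\mathbb H^k$ as the hyperboloid $\{-x_0^2 + x_1^2+\dots+x_k^2 = -1,\ x_0>0\}$ in $\mathbb M^{1,k}$, note the restrictions of the ambient linear coordinates satisfy $\Delta_{\mathrm{can}} x_i = -k x_i$ (the ambient-coordinate analogue of the $\mathbb S^k$ identity, with the curvature sign flipped), so they solve $-k u + \Delta_g u = 0$ with $k\mapsto -k$ — matching the ``frequency $-k$'' problem defining $\omega_i = \sigma_i(g,k)$ — and then repeat the normal-derivative computation with $\cos\rho,\sin\rho$ replaced by $\cosh\rho,\sinh\rho$. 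The geodesic sphere of radius $r$ is $\{x_0 = \cosh r\}$; one finds $\partial_\eta x_0 = \sinh r$, $x_0 = \cosh r$, giving $\omega_0 = \tanh r$ with eigenfunction $x_0$, and for $i\geqslant 1$, $x_i = \sinh\rho\cdot(\text{harmonic on }\mathbb S^{k-1})$ yields $\omega_1 = \coth r$ with eigenspace $\operatorname{span}\{x_1,\dots,x_k\}$. The main obstacle, such as it is, is purely bookkeeping: getting every sign right across the three conventions in play — the sign of $\Delta_g$, the orientation of the outward normal $\eta$ relative to the level sets of $x_0$, and the way the frequency $k$ enters the Steklov problem — so that the four eigenvalues come out as exactly $-\tan r$, $\cot r$, $\tanh r$, $\coth r$ rather than their negatives or reciprocals; there is no analytic difficulty, only the need to be scrupulous, which is why it is sensible to lean on the cited proofs in \cite{lima2023eigenvalue} and \cite{medvedev2025free} for the ordering and multiplicity claims and present only the short computation producing the eigenvalues.
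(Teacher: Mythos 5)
Your proposal is correct and follows essentially the same route as the paper, which gives no independent argument for this proposition but simply collects it from Theorem~3 of \cite{lima2023eigenvalue} and Proposition~5.6 of \cite{medvedev2025free}; your explicit computation in geodesic polar coordinates ($x_0=\cos\rho$, $x_i=\sin\rho\cdot\omega_i$, and the hyperbolic analogues) is exactly what those references carry out, and your deferral to them for the ordering and multiplicity claims matches the paper. The only blemish is presentational: with the genuinely outward normal $\eta=\partial_\rho$ one gets $\partial_\eta x_0=-\sin r$ and $\partial_\eta x_i=+\cos r\,\omega_i$ directly, so the intermediate values $\partial_\eta x_0=\sin r$ and $\partial_\eta x_i=-\cot r\,x_i$ you write down are taken with respect to the inward normal and then flipped, which lands on the correct eigenvalues but should be cleaned up.
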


The set of extremal metric of \eqref{eqn: sphere gen functional guess} should include the canonical metric on $\mathbb B^k(r)$. The weak derivative of 
\begin{align*}
\Theta_r(\Sigma,g)=(\cos^2 r\theta_0+\sin^2 r\theta_1)\alpha_1 a^{\beta_1}+\alpha_2 A^{\beta_2}
\end{align*}
is given by the following quadratic form
\begin{align*}
Q(u_0,u_1)=\alpha_1 a^{\beta_1-1}\left[ \cos^2 r(\dot{a}\beta_1\theta_0+a\dot{\theta_0})+\sin^2 r (\dot{a}\beta_1\theta_1+a\dot{\theta_1})  \right]+\alpha_2\beta_2 A^{\beta_2-1},
\end{align*}
where $u_0$ and $u_1$ are $L^2(\partial\Sigma,g^b)$-normalized eigenfunctions of $\theta_0$ and $\theta_1$, respectively. They are encoded in the formulas of $\dot{\theta_0}$ and $\dot{\theta_1}$.
Thus, the boundary component of $Q(u_0,u_1)$ is given by
\begin{align*}
Q_b=\alpha_1 a^{\beta_1-1}\cdot \dfrac{1}{2}\int_{\partial\Sigma}\left[ \theta_0\cos^2 r (\beta_1-a u_0^2)+\theta_1\sin^2 r(\beta_1-au_1^2) \right] \langle g^b,\dot{g^b}\rangle dL
\end{align*}
From Proposition~\ref{claim:sphere eigen}, we see that $\theta_0=-\tan r$ and $\theta_1=\cot r$. In addition, $u_0$ is a multiple of $x_0$, so it is a constant on $\partial\Sigma$. Suppose there are  $L^2(\partial\Sigma,g^b)$-normalized eigenfunctions $u_0$ of $\theta_0$, $u_1,\ldots,u_l$ of $\theta_1$ and positive constants $\lbrace t_j\rbrace_{j=1}^l$ with $\displaystyle\sum_{j=1}^l t_j=1$ such that 
\begin{align*}
\sum_{j=1}^l Q(u_0,u_j)=0.
\end{align*}
This gives 
\begin{align*}
\sum_{j=1}^l t_j u_j^2=u_0^2=\dfrac{1}{a} \equiv const,\ \partial\Sigma.
\end{align*}
When restricting to the $(x_1,\ldots,x_k)$ coordinate, we see $\partial\Sigma$ is the level set of 
the equation $x_1^2+\ldots+x_k^2=\sin^2 r$. We know that each $u_j$ is a linear combination of $x_1,x_2,\ldots,x_k$.
By assumption all $t_j>0$. Thus, the only possibility is $l=k$ and the change of coordinate from $(\sqrt{t_1} u_1,\ldots,\sqrt{t_k} u_k)$ to $(x_1,\ldots,x_k)$ is a linear change of basis of the from $cO$, where $c$ is constant scaling and $O\in O_k(\mathbb{R})$. Thus, all functions $\sqrt{t_j}u_j$ have the same $L^2(\partial\Sigma,g^b)$-norm. Now use the fact that each $u_j$ is $L^2(\partial\Sigma,g^b)$-normalized, the assumption $\displaystyle\sum_{j=1}^k t_j=1$ yields
$$t_j=\dfrac{1}{k},\ \forall 1\leqslant j\leqslant k.$$ Using this, the substitution into the interior integral component yields

\begin{align}
\alpha_1 a^{\beta_1-1}=\dfrac{1}{2}\alpha_2 \beta_2 A^{\beta_2-1}.
\end{align}
The constants $\alpha_l,\beta_l$ have to depend only on $k$, not on a specific manifold. Therefore, $\dfrac{A^{\beta_2-1}}{a^{\beta_1-1}}$ has to be a fixed constant for all $k$-dimensional extremal metrics. This is only possible, when $\beta_2=\beta_1=1$. Without loss of generality, if we take $\alpha_1=0$, we get $\alpha_2=2$.
Thus, for any $k$-dimensional case, the only possible functional of the form \eqref{eqn: sphere gen functional guess} can only be
\begin{align}
\Theta_r(\Sigma,g)=(\cos^2 r \theta_0+\sin^2 r \theta_1)a+2A,
\end{align}
which is exactly the same as the $2$-dimensional case.

For the hyperbolic case, an analogous statement can also be proved. 

Following the steps in the above spherical case, we examine the functionals in the following form
\begin{align*}
\Omega_r(\Sigma,g):=(-\omega_0 \cosh^2 r+\omega_1 \sinh^2 r)\vert\partial\Sigma\vert_g^{\beta_1} +\alpha_2 \vert \Sigma\vert_g^{\beta_2}.
\end{align*}
If we want its critical metrics to include all those inducing FBMI via corresponding eigenfunctions, then one also need to have
$\beta_1=\beta_2=1$ and $\alpha_2=2$.

\section{FBMI in hyperbolic spaces}\label{sec:hyper}
For the FBMI on hyperbolic spaces, we can develop results similar to the spherical case following the steps in \cite{medvedev2025free}. Before introducing the theorem in the hyperbolic case, let us recall the following facts and notations.

 Let $\Sigma^k$ a compact $k$-dimensional manifold, and $\mathcal{R}(\Sigma)$ be the set of all Riemannian metrics on $\Sigma^k$. Unlike the spectral problem~\eqref{sys:dsphere} in the previous section, by the divergence theorem, for any metric $g\in\mathcal{R}(\Sigma)$, the following PDE system has only trivial solutions
\begin{equation}
\left\{ \begin{aligned} 
  ku+ \Delta_g u =0 \ &\mathrm{in}\ \Sigma,\\
   u =0\ &\mathrm{on}\ \partial\Sigma.
\end{aligned} \right.
\end{equation}
For metrics $g\in\mathcal{R}(\Sigma)$, the Steklov eigenvalues of frequency $-k$ consist of the increasing sequence of real numbers $\{\omega_i\}$ such that the following system has non-trivial solutions.
\begin{equation}
\left\{ \begin{aligned} 
  ku+ \Delta_g u =0 \ &\mathrm{in}\ \Sigma,\\
   \dfrac{\partial u}{\partial \eta} =\omega_i u\ &\mathrm{on}\ \partial\Sigma.
\end{aligned} \right.
\end{equation}

 For any $r>0$ and $i\geqslant 1$, the hyperbolic FBMI functional $\Omega_{r,i}$ defined on $\mathcal{R}(\Sigma)$ \textit{in the same exact form} introduced in \cite{medvedev2025free}, is given by the following formula, regardless the dimension $k$ of $\Sigma$:
\begin{align}
\Omega_{r,i}(\Sigma,g):=(-\omega_0 \cosh^2 r+\omega_i \sinh^2 r)\vert\partial \Sigma\vert_g + 2\vert \Sigma\vert_g.
\end{align}

Now let $\tilde{\mathcal{R}}(\Sigma)$ be the subset of $\mathcal{R}(\Sigma)$ such that the PDE system of the following type only has zero solution.
\begin{equation}
\left\{ \begin{aligned} 
  \Delta_g f-\left( \dfrac{4k}{k-2}\right)f=0  \ &\mathrm{in}\ \Sigma,\\
   \dfrac{\partial f}{\partial \eta}=c \ &\mathrm{on}\ \partial\Sigma.
\end{aligned} \right.
\end{equation}
where $c\in \mathbb{R}$ is any real constant. We prove the following theorem for the hyperbolic FBMI functionals.

\begin{theorem}
Let $\Sigma^k$ with $k\geqslant 2$ be a $k$-dimensional compact smooth manifold with boundary. For $r>0$, let $\mathbb B^m(r)$ be the ball of radius $r$ in $\mathbb{H}^m$ centered at the point $(1,0,\ldots,0)$ in the Minkowski space $\mathbb{M}^{1,m}$, and $V_i(g)$ be the eigenspace of the Steklov eigenvalue $\omega_i$ of frequency $-k$. Then the following is true:
\begin{itemize}
\item If $g\in \tilde{\mathcal{R}}(\Sigma)$ is an extremal metric for $\Omega_{r,i}$ defined on $\mathcal{R}(\Sigma)$, then there exist Steklov eigenfunctions with frequency $-k$ $v_0\in V_0(g)$ and $v_1,\ldots,v_m\in V_i(g)$ such that the following map
$$\Phi:\Sigma\rightarrow \mathbb{M}^{1,m},\ x\mapsto (v_0(x),v_1(x),\ldots,v_m(x))$$
is in fact an FBMI into $\mathbb B^m(r)$.
\item If $g\in \tilde{\mathcal{R}}(\Sigma)$ is an extremal metric for $\Theta_{r,i}$ on a Riemannian conformal class $[g]$, then there exist Steklov eigenfunctions with frequency $-k$  $v_0\in V_0(g)$ and $v_1,\ldots,v_n\in V_i(g)$ such that the map $\Phi$ is an FBHM into $\mathbb B^m(r)$.
\end{itemize}
\end{theorem}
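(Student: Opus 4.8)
The plan is to follow the proof of Theorem~\ref{2.2} line by line, keeping track of two sign changes: the relevant Steklov eigenfunctions now have frequency $-k$, so each satisfies $\Delta_g u=-ku$ and $\omega_\ell$ has Rayleigh quotient $\int_\Sigma(|\nabla u|^2+ku^2)\,dA/\int_{\partial\Sigma}u^2\,dL$; and the coefficient of $\omega_0$ in $\Omega_{r,i}$ carries a minus sign. First one records the Hadamard-type formula for $\dot\omega_\ell(g_t)$ along a smooth family $(g_t)\subset\mathcal R(\Sigma)$ (the analogue of the formula preceding \eqref{sphere functioanl dev}, with $+ku_\ell^2$ in place of $-ku_\ell^2$) and differentiates $\Omega_{r,i}(g_t)=(-\omega_0\cosh^2 r+\omega_i\sinh^2 r)|\partial\Sigma|_{g_t}+2|\Sigma|_{g_t}$; exactly as in \eqref{sphere functioanl dev}, the weak derivative splits into an interior integral against $\dot g_t$ and a boundary integral against $\langle g^b,\dot g^b\rangle_t$, the $u_0$-contributions entering with the opposite sign. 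Since $\omega_0$ is simple (for connected $\Sigma$) and $t\mapsto\omega_\ell(g_t)$ is Lipschitz (as in \cite[Claim 4]{medvedev2025free}), the argument of Lemma~\ref{lemma:sphere vanish derivative} applies verbatim: extremality of $g$ puts $(0,0)$ into the convex hull of the corresponding family of interior/boundary densities (geometric Hahn--Banach, cf.\ \cite{lima2023eigenvalue}), producing $u_0\in V_0(g)$ and $u_1,\dots,u_m\in V_i(g)$, normalised in $L^2(\partial\Sigma,g^b)$, together with weights $t_j>0$, $\sum_j t_j=1$, that annihilate both components.

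Set $a=|\partial\Sigma|_g$, $v_0=\sqrt a\cosh r\cdot u_0$, $v_j=\sqrt{at_j}\sinh r\cdot u_j$ for $1\le j\le m$, and
\[
 f:=v_0^2-\sum_{j=1}^m v_j^2-1,
\]
the defect of $\Phi=(v_0,\dots,v_m)$ from the hyperboloid $\{x_0^2-\sum_{j\ge1}x_j^2=1,\ x_0>0\}\subset\mathbb{M}^{1,m}$. The boundary component yields $-\omega_0\cosh^2 r+\omega_i\sinh^2 r+\omega_0v_0^2-\omega_i\sum_{j\ge1}v_j^2=0$ on $\partial\Sigma$; using $\partial v_0/\partial\eta=\omega_0v_0$ and $\partial v_j/\partial\eta=\omega_iv_j$, this shows that $\partial f/\partial\eta=2(\omega_0\cosh^2 r-\omega_i\sinh^2 r)$ is constant, while the normalisations give $\int_{\partial\Sigma}f\,dL_g=0$. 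The interior component yields the Minkowski analogue of \eqref{eqn:interior 1}; raising indices, tracing, and using $\Delta_gv_j=-kv_j$, one arrives---for $k>2$---exactly at $\Delta_gf-\tfrac{4k}{k-2}f=0$ on $\Sigma$ (for $k=2$ the trace identity degenerates, giving $v_0^2-\sum_{j\ge1}v_j^2=1$ outright and reproducing the surface case of \cite{lima2023eigenvalue,medvedev2025free}). This is the one point where the hyperbolic case genuinely departs from the spherical one and where the hypothesis $g\in\tilde{\mathcal R}(\Sigma)$ is used: in Theorem~\ref{2.2} the analogous identity was $\Delta_gf=-\tfrac{4k}{k-2}f$, whose favourable sign let the divergence theorem, together with $\int_{\partial\Sigma}f=0$ and $\partial f/\partial\eta$ constant, force $f\equiv0$ unconditionally; here the operator $\Delta_g-\tfrac{4k}{k-2}$ has the ``wrong'' sign for $k>2$, no maximum principle is available, and one invokes instead the defining property of $\tilde{\mathcal R}(\Sigma)$---$f$ solves exactly that system with $c=2(\omega_0\cosh^2 r-\omega_i\sinh^2 r)$---to conclude $f\equiv0$. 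I expect this to be the main obstacle; beyond it the remaining work is essentially the bookkeeping of Minkowski signs needed to make the coefficient $\tfrac{4k}{k-2}$ appear and to keep $\Phi$ on the correct quadric.

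With $f\equiv0$ the image of $\Phi$ lies on $\mathbb H^m$ (the sign of $v_0$ is constant since $v_0^2\ge1$; normalise $u_0$ so that $v_0>0$). Feeding $f\equiv0$ back into the tensor identity and re-tracing pins the conformal factor to $1$ (for $k>2$), so $-dv_0\otimes dv_0+\sum_{j\ge1}dv_j\otimes dv_j=g$ and $\Phi$ is an isometric immersion into $\mathbb H^m$. Also $f\equiv0$ forces $\partial f/\partial\eta=0$, i.e. $\omega_0\cosh^2 r=\omega_i\sinh^2 r$; inserting this and $\sum_{j\ge1}v_j^2=v_0^2-1$ into the boundary identity gives $(\omega_0-\omega_i)v_0^2=(\omega_0-\omega_i)\cosh^2 r$, and since $\omega_0<\omega_1\le\omega_i$ by simplicity of $\omega_0$, we obtain $v_0\equiv\cosh r$ on $\partial\Sigma$. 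As in the proof of Theorem~B of \cite{lima2023eigenvalue} (see also Proposition~\ref{claim:sphere eigen} and \cite[Proposition~5.6]{medvedev2025free}), this identifies the free boundary with the geodesic sphere $\partial\mathbb B^m(r)$, gives $\omega_0=\tanh r$ and $\omega_i=\coth r$, and shows $d\Phi(\eta)\perp\partial\mathbb B^m(r)$; finally $v_0$ satisfies $\Delta_gv_0=-kv_0\le0$ (as $v_0>0$), so by the maximum principle $v_0\le\max_{\partial\Sigma}v_0=\cosh r$ on $\Sigma$ and $\Phi(\Sigma)\subset\overline{\mathbb B^m(r)}$. Hence $\Phi$ is an FBMI into $\mathbb B^m(r)$, which is the first bullet.

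For the conformal statement one repeats the extraction with variations restricted to $\dot g_t=h\,g_t$, $h>0$, working in $L^2(\Sigma,g)\times L^2(\partial\Sigma,g^b)$: extremality yields $u_0,u_j,t_j$ for which the boundary identity above holds pointwise on $\partial\Sigma$ and the \emph{traced} interior identity holds pointwise on $\Sigma$, exactly as in \eqref{eqn: conformal boundary der}--\eqref{eqn: conforaml interior der}. Every step of the last two paragraphs using only the trace then carries over, so again $f$ solves the system defining $\tilde{\mathcal R}(\Sigma)$, hence $f\equiv0$, and $\Phi$ maps $\Sigma$ into $\mathbb H^m$ with $v_0\equiv\cosh r$ on $\partial\Sigma$. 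One no longer has the full tensor identity, so $\Phi$ need not be isometric; but each $v_j$ being a frequency $-k$ Steklov eigenfunction gives $\Delta_g\Phi=-k\Phi$, which is Minkowski-normal to $\mathbb H^m$ along $\Phi$---precisely the condition for $\Phi:\Sigma\to\mathbb H^m$ to be harmonic. Together with $\Phi(\partial\Sigma)\subset\partial\mathbb B^m(r)$ and the orthogonality of $d\Phi(\eta)$ obtained as before, this shows $\Phi$ is an FBHM into $\mathbb B^m(r)$. The remaining ingredients---the Hadamard formula for frequency $-k$ Steklov eigenvalues and the Hahn--Banach/convex-hull extraction---are routine adaptations of \cite{lima2023eigenvalue,medvedev2025free}.
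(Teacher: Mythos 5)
Your proposal is correct and follows essentially the same route as the paper: the sign-flipped Hadamard formula and Hahn--Banach extraction, the function $f$ measuring the defect from the hyperboloid (you take the negative of the paper's $f$, which is immaterial), the trace identity yielding $\Delta_g f=\tfrac{4k}{k-2}f$ for $k>2$, and the hypothesis $g\in\tilde{\mathcal R}(\Sigma)$ invoked exactly where the paper invokes it, namely where the divergence-theorem argument of the spherical case breaks down because the eigenvalue changes sign. The endgame details you supply (constancy of $v_0$ on $\partial\Sigma$, $\omega_0=\tanh r$, $\omega_i=\coth r$, and the normality of $\Delta_g\Phi$ for the FBHM case) are the same steps the paper defers to the spherical case and to the proof of Theorem B of Lima--Menezes.
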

\begin{proof}
The case of $k=2$ has already been proved in \cite[prop. 3.2]{medvedev2025free}. In this proof, we always assume $k>2$.

We show the first statement of theorem, following the steps in Section \ref{sec:2.1}, and the case that restricts to the conformal class is completely analogous.
Let $g_t\in \mathcal{R}(\Sigma)$ with $g_0=g$ be a smooth family of metrics. The weak derivative $\dfrac{d}{dt}\omega_i$ is well-defined a.e. and given by
\begin{align*}
\dfrac{d}{dt}\omega_i=  & \int_{\Sigma} \left\langle \dfrac{1}{2}\left(k u_i^2(t)+ \vert \nabla^t u_i(t) \vert_{g(t)}^2 \right)g(t)- du_i(t)\otimes du_i(t), \dot{g}(t) \right\rangle_{g(t)} dA_t\\
& -\omega_i(g(t))\int_{\partial \Sigma} \dfrac{1}{2} u_i^2(t) \left\langle g^b(t),\dot{g}^b(t) \right\rangle_t dL_t,
\end{align*}
where $u_i(t)$ is an eigenfunction of $\omega_i(t)$, normalized with respect to the space $L^2(\partial\Sigma, g^b)$ as before. Similar to $\Theta_{r,i}$, the weak derivative of $\Omega_{r,i}$ is also given by a sum of two integrals, and at $t=0$ the integrand of the boundary component is 
\begin{align}
\dfrac{1}{2}\langle \dot{g^b},g^b\rangle \left( \omega_0 \cosh^2 r(a u_0^2-1)+ \omega_i \sinh^2 r(au_i^2-1 )    \right).
\end{align}
where $a=\vert\partial\Sigma\vert_g$ as before.
The integrand of the interior component is
\begin{align}
\label{eqn:interior hyperbolic integrand}
\langle \dot{g}, g-\cosh^2 r \tau_H(u_0)+\sinh^2 r\tau_H(u_i)\rangle,
\end{align}
where the expression $\tau_H$ is given by
\begin{align*}
\tau_H(u)=a\left(\dfrac{1}{2}(k u^2+\vert \nabla^g u\vert_g)-du\otimes du\right).
\end{align*}

Lemma \ref{lemma:sphere vanish derivative} and the argument involving Hahn-Banach theorem also work for the functional $\Omega_{r,i}$. Similar to the spherical case, we can find $L^2(\partial \Sigma,g) $-normalized functions $u_0\in V_0(g)$ and $u_1,\ldots,u_m\in V_i(g)$ together with constants $t_j>0$ and $\sum_{j=1}^m t_j=1$  such that
\begin{eqnarray}
\label{eqn:hboundary 1}
& \sum_{j=1}^m t_j \left[- \omega_0 \cosh^2 r \left( 1- au_0^2 \right)   +\omega_i \sinh^2 r \left(1-a u_j^2 \right) \right]=0\ \mathrm{on}\ \partial \Sigma.
\end{eqnarray}
Denote the rescaled functions by
\begin{align}
\label{eqn:heigenfunction rescaled}
 v_0= \sqrt{a}\cosh r \cdot u_0,\ v_j= \sqrt{a t_j}\sinh r \cdot u_j\ \mathrm{for}\ 1\leqslant j\leqslant m.
 \end{align} 
 and define $f=-v_0^2+\sum_{j=1}^m v_j^2+1$. It is not hard to see that $\int_{\partial\Sigma}f\,dL_g=0$. Also, \eqref{eqn:hboundary 1} implies that 
\begin{align}
\label{eqn:hboundary normal derivative}
\dfrac{\partial f}{\partial \eta}=2(-\omega_0\cosh^2 r+\omega_i \sinh^2 r)
\end{align}
is a constant. 
Using \eqref{eqn:interior hyperbolic integrand}, the interior component of the weak derivative of $\Omega_{r,i}$ gives
\begin{align}
\label{eqn:hinterior 1}
g+\dfrac{1}{2}\sum_{j=1}^m \left(k v_j^2+\vert \nabla^g v_j\vert^2 \right) g-\sum_{j=1}^m dv_j\otimes dv_j-\dfrac12\left(k v_0^2+\vert \nabla^g v_0\vert^2 \right) g+ dv_0\otimes dv_0=0.
\end{align}
Raise the indices using $g^{-1}$ and take the trace of above equation yields
\begin{align}
\label{eqn:htrace}
k+\dfrac{k-2}{2}\left(-\vert \nabla^g v_0\vert^2+\sum_{j=1}^m\vert \nabla^g v_j\vert^2\right)+\dfrac{k^2}{2}\left(-v_0^2+\sum_{j=1}^m v_j^2\right)=0.
\end{align}
Using equation \eqref{eqn:htrace}, we get
\begin{align}
\Delta_g f=\dfrac{4k}{k-2}f.
\end{align}
Since $g\in \tilde{\mathcal{R}}(\Sigma)$ by assumption, we must have
$f=0$ on $\Sigma^k$. It follows that the map $\Phi$ has image in $\mathbb{H}^m$. Substitute $f=0$ back to \eqref{eqn:hinterior 1} and \eqref{eqn:htrace}, we also find the map
$\Phi$ is an isometric immersion into $\mathbb{H}^m$. The remaining steps showing $\Phi$ has image and is a FBMI in fact into $B^m(r)$ are completely the same as the spherical case. This completes the proof.
\end{proof}
In addition to the theorem above, we see that the proof of Theorem \ref{2.3} can be applied to functional $\Omega_{r,i}$ at little cost. This yields the following converse statement.
\begin{theorem}
Given $i\geqslant 1$, suppose for some $g\in \mathcal{R}(\Sigma^k)$, we have either $\theta_i(g)<\theta_{i+1}(g)$ or $\theta_i(g)>\theta_{i-1}(g)$. Let $V_i(g)$ be the eigenspace of the Steklov eigenvalue $\omega_i$ of frequency $-k$.
\begin{itemize}
\item Suppose there exists $v_0\in V_0(g)$ and independent $v_1,\ldots,v_m\in V_i(g)$ such that
\begin{enumerate}
\item $-dv_0\otimes dv_0+\displaystyle\sum_{j=1}^m dv_j\otimes dv_j=g$
\item $-v_0^2+\displaystyle\sum_{j=1}^m v_j^2=-1$
\end{enumerate}
Then there is some $r>0$ such that $g$ is an extremal metric for $\Omega_{r,i}$ in the set $\mathcal{R}(\Sigma)$.
\item If there exist $v_0\in V_0(g)$ and independent $v_1,\ldots,v_m\in V_i(g)$ such that $-v_0^2+\displaystyle\sum_{j=1}^m v_j^2=-1$, the metric $g$ is extremal for $\Omega_{r,i}$ in the set $[g]$ for some $r>0$.
\end{itemize}
\end{theorem}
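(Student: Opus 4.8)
The plan is to mirror the argument for the spherical converse statement (Theorem~\ref{2.3}) almost verbatim, replacing the round-sphere relations with their Minkowski (hyperboloid) analogues, so the sign structure $-v_0^2+\sum_j v_j^2=-1$ plays the role of $v_0^2+\sum_j v_j^2=1$. First I would apply the normal derivative $\partial_\eta$ to the constraint $-v_0^2+\sum_{j=1}^m v_j^2=-1$ on $\partial\Sigma$. Using that each $v_j$ satisfies the frequency $-k$ Steklov condition $\partial_\eta v_0=\omega_0 v_0$ and $\partial_\eta v_j=\omega_i v_j$, this yields $-\omega_0 v_0^2+\omega_i\bigl(\sum_j v_j^2\bigr)=0$, i.e. $-\omega_0 v_0^2+\omega_i(v_0^2-1)=0$ on $\partial\Sigma$. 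Since $\omega_i-\omega_0\neq 0$ (one needs $\omega_i\neq\omega_0$, which follows from the hypothesis $\omega_i<\omega_{i+1}$ or $\omega_i>\omega_{i-1}$ together with simplicity of $\omega_0$ on a connected boundary component, exactly as in the spherical case), $v_0^2$ is constant on $\partial\Sigma$, and hence so is $\sum_{j=1}^m v_j^2$. One must rule out the degenerate possibility that these constants are $0$; here the argument is slightly different from the spherical case because the Dirichlet-type obstruction is automatic (the system $ku+\Delta_g u=0$, $u=0$ has only trivial solutions for every $g$ by the divergence theorem, as noted in the excerpt), so if $v_0=0$ on $\partial\Sigma$ then $v_0\equiv 0$ on $\Sigma$, forcing $\sum_j v_j^2\equiv 1$ on $\Sigma$ which contradicts $-dv_0\otimes dv_0+\sum dv_j\otimes dv_j=g$ being positive definite when combined with $\sum_j\,dv_j\otimes dv_j$ coming from a constant-norm map; a clean way is to observe that $\sum_j v_j^2=1$ on $\Sigma$ with the $v_j\in V_i(g)$ and frequency $-k$ forces $\omega_i=0$, and then re-examine the trace identity to reach a contradiction. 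I expect this degenerate-case exclusion to be the main obstacle, since the hyperbolic signature removes the convenient positivity one has on the sphere.

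Once $v_0^2=\cosh^2 r$ and $\sum_{j=1}^m v_j^2=\sinh^2 r$ on $\partial\Sigma$ for a suitable $r>0$ (the hyperbolic parametrization, legitimate because both constants are positive and differ by $1$ in the correct sense), I would rescale: set $u_0=v_0/(\sqrt a\cosh r)$ and $u_j=v_j/(\sqrt a\cdot\|v_j\|_{L^2(\partial\Sigma,g^b)})$ so that each $u_j$ is $L^2(\partial\Sigma,g^b)$-normalized, and define $t_j=\|v_j\|^2_{L^2(\partial\Sigma,g^b)}/(a\sinh^2 r)$, which are positive and sum to $1$ by the boundary constraint. Then, exactly as in \cite[prop.~3.3]{medvedev2025free} and the proof of Theorem~\ref{2.3}, the pair $(u_0,u_j)$ realizes a one-sided derivative of $\omega_i$ along any admissible family, the interior assumptions $-dv_0\otimes dv_0+\sum dv_j\otimes dv_j=g$ and $-v_0^2+\sum v_j^2=-1$ guarantee that the interior component of $Q_r(u_0,u_j)$ (the hyperbolic analogue of $Q_r^s$, with $\tau_H$ and the $\cosh/\sinh$ coefficients) vanishes, and the boundary component vanishes by the rescaling. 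Hence $\sum_{j=1}^m t_j\,Q_r(u_0,u_j)=0$, so among the terms $Q_r(u_0,u_j)$ there is at least one $\geqslant 0$ and at least one $\leqslant 0$ (unless all vanish). Using the dichotomy in the hypothesis---$\omega_i$ is either the smallest or the largest eigenvalue in its cluster---a nonnegative term bounds $\lim_{t\to 0^-}\Delta\Omega_{r,i}/t$ from below and a nonpositive term bounds $\lim_{t\to 0^+}\Delta\Omega_{r,i}/t$ from above (or vice versa), which is precisely the extremality condition. This gives the first bullet.

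For the second bullet I would run the same computation but restrict all test directions $\dot g$ to conformal variations $\dot g=h\,g$; then only the trace of the interior Euler--Lagrange tensor matters, so one needs only the constraint $-v_0^2+\sum v_j^2=-1$ (not the full metric identity), and the argument above goes through verbatim to produce an $r>0$ with $g$ extremal for $\Omega_{r,i}$ in the conformal class $[g]$. The only genuinely new bookkeeping relative to the spherical proof is tracking the signs introduced by the Minkowski inner product in $\tau_H$ and in the boundary integrand \eqref{eqn:hboundary normal derivative}; since the statement of this theorem has been set up with the matching sign conventions, these should cancel as expected, and no step requires more than the facts already assembled in Section~\ref{sec:hyper}.
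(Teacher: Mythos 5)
Your proposal is correct and follows essentially the same route as the paper, whose own ``proof'' of this theorem is just the remark that the argument for Theorem~\ref{2.3} applies to $\Omega_{r,i}$ at little cost --- which is precisely what you carry out, with the right sign bookkeeping in $\tau_H$ and in the boundary identity. One small point: the degenerate case you flag as ``the main obstacle'' is in fact vacuous here (and your aside about it forcing $\sum_j v_j^2\equiv 1$ is off --- the constraint would force $\sum_j v_j^2\equiv -1$, an immediate contradiction), since $\sum_{j\geqslant 1}v_j^2=v_0^2-1\geqslant 0$ already gives $v_0^2\geqslant 1$ on $\partial\Sigma$, and $\omega_0>0$ (divergence theorem) combined with $(\omega_i-\omega_0)v_0^2=\omega_i$ yields $v_0^2>1$, so $r>0$ comes for free.
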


\section{Alternative forms of FBMI characterizing functionals}
\label{sec:alt functional}
Here we give the functionals that (potentially) characterize the FBMI of compact manifolds into balls in $\mathbb{S}^m_+$ or $\mathbb{H}^m$. 

Let $\Sigma^k$ be a $k$-dimensional compact manifold with piecewise smooth boundary $\partial\Sigma$ as before.
We introduce the following two functionals on the set $\mathcal R(\Sigma)$ of Riemannian metrics on $\Sigma$:
 \begin{align}
 \label{eqn:alt sphere functional}
 \Xi_{r,i}^+(\Sigma,g):=\min\left\lbrace\lambda_0(g,-\tan(r)),\lambda_i(g,  \cot(r))\right\rbrace|\Sigma|_g^{2/k},\ i\geqslant 1, \, 0<r<\frac{\pi}{2}
\end{align}
 and 
  \begin{align}
  \label{eqn:alt hyperbolic functional}
 \Xi_{r,i}^-(\Sigma,g):=\min\{\lambda_0(g, \tanh(r)),\lambda_i(g, \coth(r))\}|\Sigma|_g^{2/k},\, i\geqslant 1, \, r>0.
 \end{align}
 Here $\lambda_i(g,\sigma)$ is the $i$-th eigenvalue, arranged in increasing order, of the Robin problem with $\sigma\in\mathbb R$ fixed
 \begin{align}\label{sys:robin}
 \begin{cases}
 \Delta_gu=\lambda_i(g,\sigma)u\,&\text{ in}\, \Sigma,\\
 \dfrac{\partial u}{\partial \eta}=\sigma u\, &\text{ on} \,\partial\Sigma.
 \end{cases}
 \end{align}
These two functionals are naturally associated to FBMI in the following sense. Let $\Phi\colon \Sigma \to \mathbb B^m(r)\subset \mathbb S^m_+$ be a free boundary immersion, given by the $\lambda_0(g,-\tan(r))$ and $\lambda_i(g,\cot(r))$-eigenfunctions, then the energy $E[\Phi]$ satisfies $2E[\Phi]= \Xi_{r,i}^+(\Sigma,g)$. Similarly, let $\Phi\colon \Sigma \to \mathbb B^m(r)\subset \mathbb H^m$ be a free boundary immersion, given by the $\lambda_0(g,\tanh(r))$ and $\lambda_i(g,\coth(r))$-eigenfunctions, then the energy $E[\Phi]$ satisfies $2E[\Phi]= \Xi_{r,i}^-(\Sigma,g)$. \\

 It is not hard to see that both functionals $\Xi_{r,i}^+(\Sigma,g)$ and $\Xi_{r,i}^-(\Sigma,g)$ are bounded from above on $\mathcal R(\Sigma)$. Indeed, it follows from their definitions that
 $$
 \Xi_{r,i}^+(\Sigma,g) \leqslant \lambda_i(g,  \cot(r))|\Sigma|_g^{2/k} \quad \text{and} \quad  \Xi_{r,k}^-(\Sigma,g) \leqslant \lambda_i(g,  \coth(r))|\Sigma|_g^{2/k}.
 $$
 Further, we apply~\cite[Proposition 2.6]{hassannezhad2021nodal}, which implies that $\lambda_i(g, \cot(r))<\lambda_i(g,0)=\lambda^N_i(g)$, where $\lambda^N_i(g)$ is the $i$-th Neumann eigenvalue. Similarly, $\lambda_i(g, \coth(r))<\lambda^N_i(g)$. Thus,
  $$
 \Xi_{r,i}^+(\Sigma,g) \leqslant \Lambda^N_i(\Sigma,g) \quad \text{and} \quad  \Xi_{r,i}^-(\Sigma,g) \leqslant \Lambda^N_i(\Sigma,g),
 $$
 where $\Lambda^N_i(\Sigma,g)$ denotes the $i$-th normalized Neumann eigenvalue. It has been proved in~\cite{li1982new,korevaar1993upper} that $\Lambda^N_i(\Sigma,g)$ is bounded from above in the conformal class $[g]$ for any $g\in \mathcal R(\Sigma)$ if $k>2$, and bounded from above on $\mathcal R(\Sigma)$ if $k=2$. Moreover, in~\cite{li1980estimates}, it has been shown that $\Lambda^N_i(\Sigma,g)$ is bounded from above on the set of metrics of nonnegative Ricci curvature (or more generally Ricci curvature bounded below and diameter bounded above). Summing up, we obtain the following. 
 
 \begin{proposition}\label{prop:bound}
 The functionals $\Xi_{r,i}^+(\Sigma^k,g)$ and $\Xi_{r,i}^-(\Sigma^k,g)$ are bounded from above on $\mathcal F\subset \mathcal R(\Sigma^k)$ in the following cases:
 \begin{itemize}
 \item $k=2$ and $\mathcal F=\mathcal R(\Sigma)$;
 \item $k>2$ and $\mathcal F=[g]$ for any $g\in \mathcal R(\Sigma)$;
 \item $k>2$ and $\mathcal F$ is the set of metrics with Ricci curvature bounded below and diameter bounded above.
 \end{itemize}
 \end{proposition}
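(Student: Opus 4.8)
The plan is to reduce both functionals to the $i$-th \emph{normalized Neumann eigenvalue} $\Lambda^N_i(\Sigma,g)$ and then quote the classical upper bounds for the latter in each of the three regimes.

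\textbf{Step 1: throw away the zeroth term.} Since $\min\{a,b\}\le b$, the definitions \eqref{eqn:alt sphere functional} and \eqref{eqn:alt hyperbolic functional} give immediately
$$
\Xi^+_{r,i}(\Sigma,g)\le \lambda_i(g,\cot r)\,|\Sigma|_g^{2/k},\qquad
\Xi^-_{r,i}(\Sigma,g)\le \lambda_i(g,\coth r)\,|\Sigma|_g^{2/k},
$$
so it suffices to bound the right-hand sides. \textbf{Step 2: monotonicity of Robin eigenvalues in the boundary parameter.} With the paper's convention $\Delta_g=-\mydiv_g\circ\nabla^g$, the problem \eqref{sys:robin} has the variational description
$$
\lambda_i(g,\sigma)=\inf_{V}\ \sup_{0\ne u\in V}\ \frac{\int_\Sigma|\nabla^g u|_g^2\,dA_g-\sigma\int_{\partial\Sigma}u^2\,dL_g}{\int_\Sigma u^2\,dA_g},
$$
where $V$ runs over $(i+1)$-dimensional subspaces of $H^1(\Sigma)$. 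Hence $\sigma\mapsto\lambda_i(g,\sigma)$ is non-increasing and $\lambda_i(g,0)=\lambda^N_i(g)$. Because $\cot r>0$ for $0<r<\pi/2$ and $\coth r>0$ for $r>0$, we obtain $\lambda_i(g,\cot r)\le\lambda^N_i(g)$ and $\lambda_i(g,\coth r)\le\lambda^N_i(g)$; the strict version of this inequality is precisely \cite[Proposition 2.6]{hassannezhad2021nodal}. Multiplying through by $|\Sigma|_g^{2/k}$ yields
$$
\Xi^+_{r,i}(\Sigma,g)\le \Lambda^N_i(\Sigma,g),\qquad \Xi^-_{r,i}(\Sigma,g)\le \Lambda^N_i(\Sigma,g).
$$

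\textbf{Step 3: invoke the known bounds on $\Lambda^N_i$.} For $k=2$ the functional $\Lambda^N_i$ is bounded on all of $\mathcal R(\Sigma)$ (in terms of $i$ and the topology of $\Sigma$) by \cite{li1982new,korevaar1993upper}; for $k>2$ the same references give a bound on each fixed conformal class $[g]$; and \cite{li1980estimates} gives a bound on the set of metrics with Ricci curvature bounded below and diameter bounded above. Substituting these into the previous display produces exactly the three cases in the statement.

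There is essentially no real obstacle here: the argument is a chain consisting of a trivial inequality, the elementary monotonicity of Robin eigenvalues in $\sigma$, and references. The only point requiring care is the sign bookkeeping in Step~2 — one must check that, with the sign conventions of \eqref{sys:robin} (namely $\Delta_g=-\mydiv_g\circ\nabla^g$ and $\partial u/\partial\eta=\sigma u$), increasing $\sigma$ \emph{decreases} the eigenvalues, so that the \emph{positive} frequencies $\cot r,\coth r$ push $\lambda_i$ below the Neumann value rather than above it — and to observe that the problem \eqref{sys:robin} carries no zeroth-order (frequency) term, so the comparison with the Neumann spectrum is direct with no shift to track.
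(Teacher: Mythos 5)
Your proposal is correct and follows essentially the same route as the paper: drop the minimum, compare $\lambda_i(g,\sigma)$ for $\sigma>0$ with the Neumann eigenvalue via the monotonicity in $\sigma$ (the paper cites the same \cite[Proposition 2.6]{hassannezhad2021nodal}), and then invoke the known upper bounds on $\Lambda^N_i$ from \cite{li1982new,korevaar1993upper,li1980estimates} in each of the three regimes. The extra care you take with the sign convention is a sensible addition but does not change the argument.
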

 
 Now we focus on the case $k=2$ and obtain explicit upper bounds of these functionals as in Theorem~\ref{thm:bound}, whose proof is given as follows.

\begin{proof}[Proof of Theorem~\ref{thm:bound}]
 Take a conformal map $u\colon (\Sigma,g) \to \mathbb B^2(r)$, where $\mathbb B^2(r)$ denotes the geodesic disk in $\mathbb S^2_+$ of radius $r$ centered at $(1,0,0)$. There always exists a map $u$ with degree $\deg(u)\leqslant 2(\gamma+l)$ (see~\cite{gabard2006representation}). Let $x_i,\,i=0,1,2$ be the coordinate functions of $\mathbb R^3$. Consider $u_i=u\circ x_i,\,i=0,1,2$. One has
 \begin{align}\label{eq:boundary}
 u_0=\cos r,\quad u_1^2+u_2^2=\sin^2r\, \text{ along}\, \partial \Sigma.
 \end{align}
Using the Hersch trick, one can always assume that
$$
\int_\Sigma u_i\phi dA_g=0, \, i=1,2,
$$
where $\phi$ is an eigenfunction of the eigenvalue $\lambda_1(g,\sigma)$ for ~\eqref{sys:robin} with $\sigma=\cot(r)$. Since the map $u$ is conformal, then $u^*\delta=f^2 g$, where $f$ is a positive function on $\Sigma$ and $\delta$ is the canonical metric on $\mathbb B^2(r)$. It is not hard to verify that
 $$
 \sum_{i=0}^2|\nabla^gu_i|^2_g=2f^2.
 $$
 It follows from the variational characterization that
 $$
\lambda_0(g,-\tan(r)) \int_\Sigma u_0^2\,dA_g \leqslant \int_\Sigma|\nabla^g u_0|^2_g\,dA_g+\tan(r)\int_{\partial \Sigma}u^2_0\,dL_g,
 $$
 and
  $$
 \lambda_1(g,\cot(r)) \int_\Sigma u_i^2\,dA_g \leqslant \int_\Sigma|\nabla^g u_0|^2_g\,dA_g-\cot(r)\int_{\partial \Sigma}u^2_i\,dL_g,\, i=1,2.
 $$
 Summing these inequalities, we get
 \begin{align}\label{eq:main}
  \Xi_{r,1}^+(\Sigma,g)&=\min\left\lbrace\lambda_0(g,-\tan(r)),\lambda_1(g,  \cot(r))\right\rbrace|\Sigma|_g \\ \nonumber &=\min\left\lbrace\lambda_0(g,-\tan(r)),\lambda_1(g,  \cot(r))\right\rbrace \int_\Sigma \sum_{i=0}^2u_i^2\,dA_g  \\ \nonumber &\leqslant  \lambda_0(g,-\tan(r)) \int_\Sigma u_0^2\,dA_g+\lambda_1(g,\cot(r)) \int_\Sigma \sum_{i=1}^2u_i^2\,dA_g \\ \nonumber &\leqslant \sum_{i=0}^2\int_\Sigma|\nabla^g u_i|^2_g\,dA_g=2\int_\Sigma f^2\,dA_g\leqslant 4\pi(1-\cos r)(\gamma+l),
 \end{align}
 where we used $\sum_{i=0}^2 u_i^2=1$ in the first inequality and also ~\eqref{eq:boundary} in the second inequality, which yields that $$\tan(r)\int_{\partial \Sigma}u^2_0\,dL_g=\cot(r)\int_{\partial \Sigma}\sum_{i=1}^2u^2_i\,dL_g .$$ The inequality is proved.\\
 
 Let $\Sigma$ be a topological disk. Then $\gamma=0, \, l=1$ and one gets
 $$
  \Xi_{r,1}^+(\Sigma,g)\leqslant 4\pi(1-\cos r).
 $$
 Let equality hold. Then we also have equalities at each step in~\eqref{eq:main}. This immediately implies that $\lambda_0(g,-\tan(r))=\lambda_1(g,\cot(r))$. Moreover,
 \begin{align*}
 0=\Delta_g\sum_{i=0}^2u_i^2=2&\sum_{i=0}^2u_i\Delta_gu_i-2\sum_{i=0}^2|\nabla^gu_i|^2_g\\=&2\lambda_0(g,-\tan(r))u_0^2+2\lambda_1(g,\cot(r))\sum_{i=1}^2u_i^2-2\sum_{i=0}^2|\nabla^gu_i|^2_g,
 \end{align*}
 which yields
 \begin{align*}
2f^2=\sum_{i=0}^2|\nabla^gu_i|^2_g&=\lambda_0(g,-\tan(r))u_0^2+\lambda_1(g,\cot(r))\sum_{i=1}^2u_i^2\\&=\lambda_0(g,-\tan(r))=\lambda_1(g,\cot(r)),
 \end{align*}
 hence $f\equiv const$. Then the metric $g$ is homothetic to $\delta$. In this case, we have $|\Sigma|_g=2\pi f^2(1-\cos r)$. Since $ \Xi_{r,1}^+(\Sigma,g)=4\pi(1-\cos r)$,
 $$
 \lambda_0(g,-\tan(r))=\lambda_1(g,\cot(r))=\dfrac{2}{f^2}.
 $$
 But then $f\equiv 1$, i.e., $\Sigma=\mathbb B^2(r)$.
  \end{proof}
  
Using the approach developed in the paper~\cite{medvedev2025free}, when studying the behaviour of functionals $\Theta_{r,i}(\Sigma^2,g)$ and $\Omega_{r,i}(\Sigma^2,g)$, we prove the following.

\begin{proposition}\label{prop:unbound}
The functionals $\Xi_{r,i}^+(\Sigma^2,g)$ and $\Xi_{r,i}^-(\Sigma^2,g)$ are not bounded from below in $[g]$ for any $g\in\mathcal R(\Sigma)$.
\end{proposition}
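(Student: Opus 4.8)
The plan is to fix an arbitrary $g\in\mathcal R(\Sigma)$ and to construct, inside the conformal class $[g]$, a family of metrics $g_N=e^{2\phi_N}g$ along which the functional tends to $-\infty$, mimicking the degenerations used for $\Theta_{r,i}$ and $\Omega_{r,i}$ in~\cite{medvedev2025free}. The driving point is the sign convention $\Delta_g=-\mydiv_g\circ\nabla^g$: as in the proof of Theorem~\ref{thm:bound}, the variational characterisation of the Robin eigenvalues~\eqref{sys:robin} reads $\lambda_j(g,\sigma)=\min_{\dim V=j+1}\max_{u\in V\setminus\{0\}}\mathcal Q^{(\sigma)}_g[u]$ with $\mathcal Q^{(\sigma)}_g[u]=\big(\int_\Sigma|\nabla^g u|^2_g\,dA_g-\sigma\int_{\partial\Sigma}u^2\,dL_g\big)\big/\int_\Sigma u^2\,dA_g$, and in dimension two the Dirichlet integral is conformally invariant, so
$$
\mathcal Q^{(\sigma)}_{g_N}[u]=\frac{\int_\Sigma|\nabla^g u|^2_g\,dA_g-\sigma\int_{\partial\Sigma}u^2\,e^{\phi_N}\,dL_g}{\int_\Sigma u^2\,e^{2\phi_N}\,dA_g}.
$$
Whenever $\sigma>0$ the boundary term enters with a negative sign, so a tall spike $\phi_N$ supported in a thin collar of $\partial\Sigma$ makes $e^{\phi_N}$ huge along $\partial\Sigma$ while keeping $e^{2\phi_N}$ integrable, and this will force the relevant eigenvalues down to $-\infty$.

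For the building block, pick a short boundary arc $J$ of length $|J|_g$ away from the non-smooth points of $\partial\Sigma$ and use Fermi coordinates $(s,t)$ on a collar of $J$, where $g=dt^2+(1+O(t))\,ds^2$ is uniformly comparable to the flat metric. Put $\phi_N(s,t)=N\psi(t/\rho_N)$ and $u_N(s,t)=\chi(t/\tau_N)$, where $\psi,\chi$ are fixed smooth cutoffs equal to $1$ near $0$ and supported in $[0,1]$, with $\rho_N=e^{-2N}$ and $\tau_N=e^{-N/2}$ (so $\rho_N\ll\tau_N$ and $u_N\equiv1$ on $\operatorname{supp}\phi_N$ for $N$ large). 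An elementary computation yields $\int_\Sigma|\nabla^g u_N|^2_g\,dA_g\asymp\tau_N^{-1}=e^{N/2}$, $\int_{\partial\Sigma}u_N^2e^{\phi_N}\,dL_g\asymp e^{N}$, and $\int_\Sigma u_N^2e^{2\phi_N}\,dA_g\asymp\rho_Ne^{2N}+\tau_N\asymp1$, with implied constants depending only on $g$ and $|J|_g$. Hence $\mathcal Q^{(\sigma)}_{g_N}[u_N]\to-\infty$ for every fixed $\sigma>0$, the boundary contribution $\asymp\sigma e^{N}$ dominating the Dirichlet contribution $\asymp e^{N/2}$; moreover $\phi_N\geqslant0$ gives $|\Sigma|_{g_N}\geqslant|\Sigma|_g>0$.

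For $\Xi^-_{r,i}$ this already suffices (as $k=2$, the normalising exponent $2/k$ equals $1$): since $\tanh(r)>0$ we get $\lambda_0(g_N,\tanh(r))\leqslant\mathcal Q^{(\tanh(r))}_{g_N}[u_N]\to-\infty$, whence $\Xi^-_{r,i}(\Sigma,g_N)=\min\{\lambda_0(g_N,\tanh(r)),\lambda_i(g_N,\coth(r))\}\,|\Sigma|_{g_N}\leqslant\mathcal Q^{(\tanh(r))}_{g_N}[u_N]\,|\Sigma|_g\to-\infty$. For $\Xi^+_{r,i}$ the extra difficulty is that the boundary coefficient of $\lambda_0(g_N,-\tan(r))$ is $+\tan(r)>0$, so that term is always positive and the minimum is governed by $\lambda_i(g_N,\cot(r))$ with $i\geqslant1$; bounding this eigenvalue from above by min--max requires an $(i+1)$-dimensional test space. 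To produce one, run the spike construction simultaneously near $i+1$ pairwise disjoint arcs $J_0,\dots,J_i\subset\partial\Sigma$ (possible since $\partial\Sigma\neq\emptyset$): because the collars have width $\tau_N\to0$, the functions $\phi_N=\sum_\ell\phi_N^{(\ell)}$ and $u_N^{(0)},\dots,u_N^{(i)}$ have pairwise disjoint supports for $N$ large, so the $u_N^{(\ell)}$ are linearly independent and the Rayleigh quotient of any combination is a weighted average of the $\mathcal Q^{(\cot(r))}_{g_N}[u_N^{(\ell)}]$. The min--max principle then gives $\lambda_i(g_N,\cot(r))\leqslant\max_\ell\mathcal Q^{(\cot(r))}_{g_N}[u_N^{(\ell)}]\to-\infty$, and since $\min\{\lambda_0(g_N,-\tan(r)),\lambda_i(g_N,\cot(r))\}\leqslant\lambda_i(g_N,\cot(r))<0$ and $|\Sigma|_{g_N}\geqslant|\Sigma|_g$, we conclude $\Xi^+_{r,i}(\Sigma,g_N)\leqslant\lambda_i(g_N,\cot(r))\,|\Sigma|_g\to-\infty$.

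The main obstacle is the calibration of the two length scales inside the spike in the $\Xi^+$ case. One needs the conformal factor concentrated enough, $\rho_N\lesssim e^{-2N}$, that $\int_\Sigma u^2e^{2\phi_N}\,dA_g$ stays bounded, but the transition layer of the test function must be \emph{wide}, $\tau_N\gg e^{-N}$, so that its Dirichlet energy $\asymp\tau_N^{-1}$ is negligible against the gained boundary term $\asymp e^{N}$: were $u_N$ merely a cut-off on the spike itself, its huge gradient — which enters $\mathcal Q^{(\sigma)}_{g_N}$ with a $+$ sign — would overwhelm the boundary term and the quotient would fail to become negative. Making the $\asymp$-estimates uniform over linear combinations, so that the min--max bound on $\lambda_i$ is legitimate, and absorbing the $O(t)$ deviation of $g$ from the flat collar metric, are then routine and follow the scheme of~\cite{medvedev2025free}.
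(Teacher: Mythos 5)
Your proposal is correct, and it reaches the conclusion by a more hands-on route than the paper. Both arguments share the same driving idea: blow up the conformal factor in a shrinking collar of the boundary so that, for a Robin parameter $\sigma>0$, the negatively-signed boundary term $-\sigma\int_{\partial\Sigma}u^2\,dL$ in the Rayleigh quotient overwhelms the (conformally invariant) Dirichlet energy while the $L^2$-mass and the area stay under control. The paper implements this by taking $\varphi_\varepsilon=-\log\varepsilon$ on all of $\partial\Sigma$, supported in an $\varepsilon^2$-collar, deducing the clean comparison $\lambda_i(g_\varepsilon,\sigma)\leqslant\lambda_i(g,\sigma/\varepsilon)$, invoking \cite[Proposition 2.6]{hassannezhad2021nodal} for the blow-down of Robin eigenvalues as the parameter tends to $+\infty$, and checking $|\Sigma|_{g_\varepsilon}\to|\Sigma|_g$ via a normal-coordinate area estimate; it then bounds both $\Xi^\pm_{r,i}$ by $\lambda_i(g_\varepsilon,\cdot)\,|\Sigma|_{g_\varepsilon}$. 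You instead build explicit two-scale spikes and test functions near $i+1$ disjoint boundary arcs and estimate the quotients directly, using $\lambda_0$ for $\Xi^-_{r,i}$ and a disjoint-support min--max (mediant inequality) bound on $\lambda_i$ for $\Xi^+_{r,i}$. What your version buys is self-containedness — no external asymptotic result is needed, and because your denominators are explicitly pinned to $\asymp 1$ you sidestep the delicate step in the reduction $\lambda_i(g_\varepsilon,\sigma)\leqslant\lambda_i(g,\sigma/\varepsilon)$ where enlarging the $L^2$-denominator interacts with a numerator of indefinite sign; what the paper's version buys is brevity and a reusable comparison inequality. Two small points to tidy up in your write-up: $\phi_N$ and $u_N$ as written depend only on the Fermi coordinate $t$, so to be supported in a collar of the arc $J$ (and to make the $u_N^{(\ell)}$ genuinely disjointly supported and continuous after extension by zero) you must also insert cutoffs in $s$; this perturbs the boundary and gradient integrals only by harmless factors. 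Also, state explicitly that the mass $\int_\Sigma u_N^2e^{2\phi_N}\,dA_g$ is bounded \emph{below} by the spike contribution $\asymp\rho_Ne^{2N}=1$, since both an upper and a lower bound on the denominator are needed for the quotient to diverge.
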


\begin{proof}
Let $(\partial\Sigma)_{\varepsilon^2}$ be the $\varepsilon^2$-neighbourhood of $\partial\Sigma$ in $(\Sigma,g)$. Consider the following family $(\varphi_{\varepsilon})_{\varepsilon}$ of non-negative functions on $\Sigma$ such that
\begin{gather*}
 supp(\varphi_{\varepsilon}) \subset (\partial\Sigma)_{\varepsilon^2},\quad \varphi_{\varepsilon} < -\log \varepsilon ~\text{on}~\Sigma, \quad \text{and}~ \varphi_{\varepsilon}=-\log{\varepsilon}~\text{on}~\partial\Sigma.
\end{gather*}
Consider the following family of conformal metrics $(g_{\varepsilon})_{\varepsilon}\in [g]$, where $g_{\varepsilon}:=e^{2\varphi_{\varepsilon}}g$. Then for the Robin problem
$$
 \begin{cases}
 \Delta_{g_{\varepsilon}}u=\lambda_i(g_\varepsilon,\sigma) u\,&\text{ in}\, \Sigma,\\
 \dfrac{\partial u}{\partial \eta_{\varepsilon,\alpha}}=\sigma u\, &\text{ on} \,\partial\Sigma
 \end{cases}
$$  
with $\sigma>0$, the variational characterization yields (see e.g.~\cite[Formula (3.1.17)]{levitin2023topics})
$$
\lambda_i(g_\varepsilon,\sigma) \leqslant \max_{0\neq u\in W}\frac{\displaystyle\int_\Sigma|\nabla^{g_\varepsilon}u|^2_{g_{\varepsilon}}\,dA_{g_{\varepsilon}}-\sigma\int_{\partial \Sigma}u^2\,dL_{g_{\varepsilon}}}{\displaystyle\int_\Sigma u^2\,dA_{g_{\varepsilon}}},
$$
where $W$ is an $(i+1)$-dimensional space in $H^1(\Sigma,dv_{g_{\varepsilon}})$, which is equivalent to $H^1(\Sigma,dv_g)$. Using the conformal invariance of the Dirichlet energy for surfaces and the observation that
$$
\int_\Sigma u^2\,dA_{g_{\varepsilon}}=\int_{\Sigma\setminus (\partial\Sigma)_{\varepsilon^2}} u^2\,dA_{g}+\int_{(\partial\Sigma)_{\varepsilon^2}} u^2\,e^{2\varphi_{\varepsilon}}dA_{g}>\int_\Sigma u^2\,dA_{g},
$$
since $\varphi_{\varepsilon}\geqslant 0$, we get
$$
\lambda_i(g_\varepsilon,\sigma) < \max_{0\neq u\in W}\frac{\displaystyle\int_\Sigma|\nabla^{g}u|^2_{g}\,dA_{g}-\frac{1}{\varepsilon}\sigma\int_{\partial \Sigma}u^2\,dL_{g}}{\displaystyle\int_\Sigma u^2\,dA_{g}}.
$$
Taking the minimum over all $W\subset H^1(\Sigma,dv_g)$ of dimension $i+1$ from both parts of this inequality and using the variational characterization~\cite[Formula (3.1.17)]{levitin2023topics} once again, we get
\begin{align}\label{ineq:sigma}
\lambda_i(g_\varepsilon,\sigma) \leqslant \lambda_i(g,\dfrac{1}{\varepsilon}\sigma).
\end{align}
Consider $\Xi_{r,i}^+(\Sigma,g)$. The case of $\Xi_{r,i}^-(\Sigma,g)$ is absolutely similar. From its definition, we have
$$
\Xi_{r,i}^+(\Sigma,g_{\varepsilon}) \leqslant \lambda_i(g_{\varepsilon}, \cot(r))|\Sigma|_{g_{\varepsilon}}.
$$
Take $\sigma=\cot(r)$ in~\eqref{ineq:sigma}, then the previous inequality becomes
\begin{align}\label{ineq:Xi+}
\Xi_{r,i}^+(\Sigma,g_{\varepsilon}) \leqslant \lambda_i(g, \dfrac{1}{\varepsilon}\cot(r))|\Sigma|_{g_{\varepsilon}}
\end{align}
It is not hard to see that $|\Sigma|_{g_{\varepsilon}} \to |\Sigma|_{g}$, as $\varepsilon\to 0$. Indeed,
\begin{align}\label{eq:area}
&|\Sigma|_{g}<|\Sigma|_{g_{\varepsilon}} =|\Sigma\setminus (\partial\Sigma)_{\varepsilon^2}|_{g}+|(\partial\Sigma)_{\varepsilon^2}|_{g_{\varepsilon}}=|\Sigma\setminus (\partial\Sigma)_{\varepsilon^2}|_{g}+\int_{(\partial\Sigma)_{\varepsilon^2}}e^{2\varphi_{\varepsilon}}dA_{g}\\\nonumber &\leqslant |\Sigma\setminus (\partial\Sigma)_{\varepsilon^2}|_{g}+\frac{1}{\varepsilon^2}\int_{(\partial\Sigma)_{\varepsilon^2}}dA_{g}=|\Sigma\setminus (\partial\Sigma)_{\varepsilon^2}|_{g}+\frac{1}{\varepsilon^2}|(\partial\Sigma)_{\varepsilon^2}|_g.
\end{align}
In order to compute $|(\partial\Sigma)_{\varepsilon^2}|_g$, we use the normal coordinates $(r,\theta)$, centered at a point $p\in\partial\Sigma$. Then it is well-known that $g=dr^2+(r-(K(p)/6)r^3+O(r^4))^2d\theta^2$, where $K(p)$ is the Gauss curvature of $(\Sigma,g)$ at $p$. Then the area with respect to $g$ of the part of the disk centered at $p$ of radius $\varepsilon^2$ containing in $\Sigma$ is less than
$$
2\pi\int_0^{\varepsilon^2}\left(r-\frac{K(p)}{6}r^3+O(r^4)\right)\,dr=\pi\varepsilon^4+o(\varepsilon^8), \text{ as } \varepsilon\to 0.
$$
 Let $\partial_p\Sigma$ denote the connected component of $\partial \Sigma$, containing $p$. Hence, because of the overlapping, the $\varepsilon^2$-tubular neighborhood of $\partial_p\Sigma$ has area with respect to $g$ less than
$$
\left(\pi\varepsilon^4+o(\varepsilon^8)\right)|\partial_p\Sigma|_g,\text{ as } \varepsilon\to 0.
$$
Hence, $|(\partial\Sigma)_{\varepsilon^2}|_g<\pi\varepsilon^4|\partial\Sigma|_g+o(\varepsilon^8)$, as $\varepsilon\to 0$, where $|\partial\Sigma|_g$ is the total length of the boundary with respect to $g$. Coming back to~\eqref{eq:area}, we get
$$
|\Sigma|_{g}<|\Sigma|_{g_{\varepsilon}}=|\Sigma\setminus (\partial\Sigma)_{\varepsilon^2}|_{g}+\pi\varepsilon^2|\partial\Sigma|_g+o(\varepsilon^6), \text{ as } \varepsilon\to 0
$$
and we obtain the desired by the squeezing theorem. 

Further, $\lambda_i(g, \dfrac{1}{\varepsilon}\cot(r))\to-\infty$, as $\varepsilon\to 0$ by~\cite[Proposition 2.6]{hassannezhad2021nodal}. Hence, coming back to~\eqref{ineq:Xi+}, we get
$$
\Xi_{r,i}^+(\Sigma,g_{\varepsilon}) \to -\infty,~\text{ as }~\varepsilon\to 0.
$$
\end{proof}
  
We now investigate the critical points for these two functionals. We start with properties of Laplacian eigenvalues with Robin boundary conditions. Following the approach in \cite{lima2023eigenvalue}, we can prove the following lemma

\begin{lemma}
\label{lemma:alt derivative}
Let $\Sigma^k$ be a compact manifold with piecewise-smooth boundary. Suppose $(g_t)_t$ is a family of Riemannian metrics analytically indexed by $t\in(-\epsilon,\epsilon)$ with $g_0=g$. Denote $g_t^b$ the induced metric on $\partial\Sigma$ with $g^b=g_0^b$. For each prescribed  $\sigma\in\mathbb{R}$, let $\lambda_0^t\leqslant \lambda_1^t\leqslant \ldots$ be the Laplacian eigenvalues such that
\begin{equation}
\begin{cases}
\label{eqn:dirichlet to neumann eqn 1}
\Delta_{g_t} u=\lambda_i^t u\ &\mathrm{in}\ \Sigma,  \\
\dfrac{\partial u}{\partial \eta^t}= \sigma u\ &\mathrm{on}\ \partial\Sigma.
\end{cases}
\end{equation}
Denote $E_i(g_t,\sigma)$ the eigenspace of $\lambda_i^t$ satisfying \eqref{eqn:dirichlet to neumann eqn 1}. Then the following hold.
\begin{enumerate}[(i)]
\item \label{lemma:item 1} Each eigenvalue $\lambda_i^t$ is Lipschitz in $t$, and admits a weak derivative in $t$.
\item \label{lemma:item 5} Suppose a family of metrics $(g_t)$ has derivatives $\dfrac{d}{dt}(g_t)=h_t$ on $\Sigma$ and $\dfrac{d}{dt}(g^b_t)=h_t^b$ on $\partial\Sigma$. Denote $dA_t$ and $dL_t$ the measures, induced by $g_t$ on $\Sigma$ and $\partial\Sigma$, respectively. We can find an $L^2(\Sigma,dA_t)$ normalized functions $\phi_i^t$ such that the weak derivative of $\lambda_i^t$ is given by the following formula
\begin{equation}
\begin{aligned}
\label{eqn: alt deri quadratic}
  -\int_{\Sigma} \left\langle d\phi_i^t\otimes d\phi_i^t+\dfrac{1}{4} \Delta_{g_t} (\phi_i^t)^2 g_t,   h_t \right\rangle_t dA_t-\dfrac{\sigma}{2}\int_{\partial\Sigma}(\phi_i^t)^2\langle h_t^b,g_t^b  \rangle_t dL_t.
\end{aligned}
\end{equation}
\end{enumerate}
\end{lemma}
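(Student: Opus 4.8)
The plan is to follow the classical first-variation argument for eigenvalues, as carried out for the Steklov-type functionals in \cite{lima2023eigenvalue} and \cite[Claim 4]{medvedev2025free}, now applied to the Robin problem \eqref{eqn:dirichlet to neumann eqn 1} at fixed frequency $\sigma$. First I would write the problem weakly: for $u,\psi\in H^1(\Sigma)$ put $B_t(u,\psi):=\int_\Sigma\langle\nabla^{g_t}u,\nabla^{g_t}\psi\rangle_{g_t}\,dA_t-\sigma\int_{\partial\Sigma}u\psi\,dL_t$ and $M_t(u,\psi):=\int_\Sigma u\psi\,dA_t$, so that $u$ solves \eqref{eqn:dirichlet to neumann eqn 1} with eigenvalue $\lambda$ precisely when $B_t(u,\psi)=\lambda M_t(u,\psi)$ for all $\psi$; the form $B_t$ is closed and semibounded (using a trace inequality when $\sigma>0$) with compact resolvent, so $\lambda_i^t=\min_{\dim W=i+1}\max_{0\neq u\in W}B_t(u,u)/M_t(u,u)$ is well defined and the spectrum accumulates only at $+\infty$.

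For part (i), the point is that analyticity of $(g_t)$ makes $g_t^{ij}\sqrt{\det g_t}$, $\sqrt{\det g_t}$ and the boundary volume density analytic in local coordinates, hence Lipschitz on compact $t$-intervals; consequently $B_t$ and $M_t$, viewed as fixed forms on $H^1(\Sigma)$ and $L^2(\Sigma)$, obey a uniform bound $|B_t(u,u)-B_s(u,u)|+|M_t(u,u)-M_s(u,u)|\leqslant C|t-s|\,\|u\|_{H^1}^2$. Feeding this into the min--max formula shows that $t\mapsto\lambda_i^t$ is Lipschitz, hence differentiable a.e., which is the weak derivative claimed. Equivalently, one may invoke Kato--Rellich analytic perturbation theory to obtain analytic branches of eigenvalues with $M_t$-orthonormal eigenfunctions; reordering these into increasing order gives the continuous, piecewise-analytic $\lambda_i^t$, with one-sided derivatives everywhere and genuine derivatives off a discrete set.

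For part (ii), I would fix $t_0$ at which $\lambda_i^t$ is differentiable and choose, through it, an analytic branch $\lambda(t)$ with $M_t$-orthonormal eigenfunctions $\phi(t)=\phi_i^t$. From $\lambda(t)=B_t(\phi(t),\phi(t))$ and the normalization $M_t(\phi(t),\phi(t))\equiv 1$, differentiating at $t_0$ and using the eigenvalue equation tested against $\dot\phi$ makes the $\dot\phi$-terms cancel, leaving $\dot\lambda=\dot B(\phi,\phi)-\lambda\,\dot M(\phi,\phi)$, where a dot on a form denotes differentiation of its $t$-dependent integrand with the arguments frozen. The standard variation formulas $\tfrac{d}{dt}g_t^{ij}=-h^{ij}$, $\tfrac{d}{dt}\,dA_t=\tfrac12\langle g,h\rangle\,dA$ and $\tfrac{d}{dt}\,dL_t=\tfrac12\langle g^b,h^b\rangle\,dL$ (indices raised with $g$, $h=\dot g$) then yield $\dot B(\phi,\phi)=\int_\Sigma\langle -d\phi\otimes d\phi+\tfrac12|\nabla^g\phi|_g^2\,g,\,h\rangle\,dA-\tfrac{\sigma}{2}\int_{\partial\Sigma}\phi^2\langle g^b,h^b\rangle\,dL$ and $\dot M(\phi,\phi)=\tfrac12\int_\Sigma\phi^2\langle g,h\rangle\,dA$, so that $\dot\lambda=\int_\Sigma\langle -d\phi\otimes d\phi+\tfrac12(|\nabla^g\phi|_g^2-\lambda\phi^2)g,\,h\rangle\,dA-\tfrac{\sigma}{2}\int_{\partial\Sigma}\phi^2\langle g^b,h^b\rangle\,dL$. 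The final step is the algebraic identity $\Delta_g(\phi^2)=-\mydiv(2\phi\nabla^g\phi)=2\phi\,\Delta_g\phi-2|\nabla^g\phi|_g^2=2\lambda\phi^2-2|\nabla^g\phi|_g^2$, valid because $\Delta_g\phi=\lambda\phi$, which rewrites $\tfrac12(|\nabla^g\phi|_g^2-\lambda\phi^2)$ as $-\tfrac14\Delta_g(\phi^2)$ and turns the expression above into exactly \eqref{eqn: alt deri quadratic}.

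The main obstacle I anticipate is not the computation but the non-smoothness caused by eigenvalue crossings: one must ensure that at a.e.\ $t$ there is an admissible $L^2(\Sigma,dA_t)$-normalized eigenfunction $\phi_i^t$ for which the formula holds, which is precisely the issue settled in \cite{lima2023eigenvalue,medvedev2025free} via analytic perturbation theory (or, equivalently, by working with one-sided analytic eigenbranches and using the absolute continuity of the Lipschitz function $\lambda_i^t$). The requisite elliptic regularity — smoothness of Robin eigenfunctions up to the smooth part of $\partial\Sigma$, legitimizing the integration by parts and the pointwise identity for $\Delta_g(\phi^2)$ — is standard.
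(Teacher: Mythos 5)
Your proposal is correct and follows essentially the same route the paper takes (the paper itself gives no written proof, deferring to the first-variation argument of Lima--Menezes): min--max/analytic perturbation theory for the Lipschitz property and the selection of admissible eigenfunctions at crossings, then the Hadamard-type computation $\dot\lambda=\dot B(\phi,\phi)-\lambda\dot M(\phi,\phi)$ combined with the identity $\Delta_g(\phi^2)=2\lambda\phi^2-2|\nabla^g\phi|_g^2$ to recast $\tfrac12(|\nabla^g\phi|_g^2-\lambda\phi^2)$ as $-\tfrac14\Delta_g(\phi^2)$. The signs and the boundary term all check out against \eqref{eqn: alt deri quadratic}.
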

From the lemma above, we see that for a smooth family $(g_t)_t$ of metrics, the functions $\Xi^+_{r,k}(g_t)$ and $\Xi^+_{r,k}(g_t)$ are Lipschitz, hence differentiable almost everywhere. To study the critical metrics of the functionals $\Xi^+_{r,k}$ and $\Xi^-_{r,k}$, we need the following statement on the weak derivatives of the normalized Laplace eigenvalues and of these functionals. 
\begin{lemma}
\label{lemma:normalized derivative}
For a smooth family $(g_t)_{t\in(-\varepsilon,\varepsilon)}$ of Riemannian metrics, let $h_t=\dfrac{d}{dt} (g_t)$. The weak derivative of the following normalized Laplace eigenvalue 
\begin{align}
\label{eqn:normalized eigenvalue}
\lambda_i\left(g_t, \sigma \right)\vert\Sigma\vert_{g_t}^{2/k},\ i\geqslant 0
\end{align}
is given by
\begin{equation}
\begin{aligned} 
\label{eqn:normalized eigenvalue derivative}
 Q_{t,i}(\sigma)(u^t)= & -A_t^{(2-k)/k}  \biggr( \int_{\Sigma}\left\langle A_t\left( du^t\otimes du^t+\dfrac{1}{4}\Delta_{g_t}(u^t)^2g _t\right)-\dfrac{\lambda_i^t}{k}g_t,h_t\right\rangle_t d\mu_t\\
 & +\dfrac{\sigma A_t}{2}\int_{\partial\Sigma} (u_t)^2 \langle g^b_t,h^b_t\rangle_t d\nu_t \biggr)
\end{aligned}
\end{equation}
for some $L^2(\Sigma,g_t)$-normalized $u^t\in E_i(g_t,\sigma)$, and  $A_t=\vert \Sigma\vert_{g_t}$. Given any fixed $i\geqslant 1$, the functions $\Xi^+_{r,i}(g_t)$ and $\Xi^-_{r,i}(g_t)$ are differentiable almost everywhere, and their weak derivative formulae can directly obtain from \eqref{eqn:normalized eigenvalue derivative} %
\end{lemma}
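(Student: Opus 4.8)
The plan is to obtain \eqref{eqn:normalized eigenvalue derivative} by combining the first variation of the Robin eigenvalue $\lambda_i(g_t,\sigma)$, already supplied by Lemma~\ref{lemma:alt derivative}, with the elementary first variation of the volume, via the Leibniz rule; and then to deduce the statement for $\Xi^{\pm}_{r,i}$ by noting that a finite minimum of Lipschitz functions is Lipschitz. First I would record the two ingredients. By parts (i) and (ii) of Lemma~\ref{lemma:alt derivative}, the map $t\mapsto\lambda_i^t=\lambda_i(g_t,\sigma)$ is Lipschitz and its weak derivative, at any point of differentiability, is
\[
\dfrac{d}{dt}\lambda_i^t=-\int_\Sigma\left\langle d\phi_i^t\otimes d\phi_i^t+\dfrac{1}{4}\Delta_{g_t}(\phi_i^t)^2 g_t,\,h_t\right\rangle_t dA_t-\dfrac{\sigma}{2}\int_{\partial\Sigma}(\phi_i^t)^2\langle h_t^b,g_t^b\rangle_t dL_t,
\]
for a suitable $L^2(\Sigma,dA_t)$-normalized $\phi_i^t\in E_i(g_t,\sigma)$. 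On the other hand, the standard first variation of the volume density, $\frac{d}{dt}dA_t=\tfrac12\langle g_t,h_t\rangle_t dA_t$, shows that $t\mapsto A_t:=|\Sigma|_{g_t}$ is smooth with $\frac{d}{dt}A_t=\tfrac12\int_\Sigma\langle g_t,h_t\rangle_t dA_t$, whence $\frac{d}{dt}A_t^{2/k}=\tfrac1k A_t^{(2-k)/k}\int_\Sigma\langle g_t,h_t\rangle_t dA_t$.

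Since $\lambda_i^t$ is Lipschitz and $A_t^{2/k}$ is smooth, the product $\lambda_i^t A_t^{2/k}$ is Lipschitz, hence differentiable for a.e.\ $t$; at every such $t$ where $\lambda_i^t$ is moreover differentiable (again a.e.\ $t$) the Leibniz rule applies to the a.e.\ derivative. Carrying it out gives
\[
\dfrac{d}{dt}\left(\lambda_i^t A_t^{2/k}\right)=A_t^{2/k}\,\dfrac{d}{dt}\lambda_i^t+\dfrac{\lambda_i^t}{k}A_t^{(2-k)/k}\int_\Sigma\langle g_t,h_t\rangle_t dA_t,
\]
and then, writing $A_t^{2/k}=A_t\cdot A_t^{(2-k)/k}$, substituting the formula for $\frac{d}{dt}\lambda_i^t$, and collecting the two interior integrands into a single pairing with $h_t$, one arrives at exactly \eqref{eqn:normalized eigenvalue derivative} with $u^t=\phi_i^t$, $d\mu_t=dA_t$, and $d\nu_t=dL_t$. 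This establishes the first assertion of the lemma; the algebra is just bookkeeping with the exponents $2/k$ and $(2-k)/k$.

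For $\Xi^{\pm}_{r,i}$: each is of the form $\min\{f(t),\tilde f(t)\}$, where $f(t)=\lambda_0(g_t,\sigma_0)A_t^{2/k}$ and $\tilde f(t)=\lambda_i(g_t,\sigma_i)A_t^{2/k}$ for the relevant values $(\sigma_0,\sigma_i)$, namely $(-\tan r,\cot r)$ for $\Xi^+_{r,i}$ and $(\tanh r,\coth r)$ for $\Xi^-_{r,i}$. By the first assertion, $f$ and $\tilde f$ are Lipschitz, hence so is $\min\{f,\tilde f\}$, which is therefore differentiable a.e. At a.e.\ $t$ all three of $f$, $\tilde f$, $\min\{f,\tilde f\}$ are differentiable; on the open set $\{f<\tilde f\}$ the minimum coincides with $f$ near $t$, so its weak derivative is $Q_{t,0}(\sigma_0)$ from \eqref{eqn:normalized eigenvalue derivative}, and symmetrically on $\{\tilde f<f\}$ it is $Q_{t,i}(\sigma_i)$; on the crossing locus $\{f=\tilde f\}$, away from the null set where some function fails to be differentiable, differentiability of $\min\{f,\tilde f\}$ forces $f'(t)=\tilde f'(t)$, so the two expressions agree and either may be used. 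Thus the weak derivatives of $\Xi^{\pm}_{r,i}(g_t)$ are obtained directly from \eqref{eqn:normalized eigenvalue derivative}, as claimed.

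I expect no deep difficulty here once Lemma~\ref{lemma:alt derivative} is in hand; the only points requiring genuine care are (a) justifying the Leibniz rule for the \emph{weak} (a.e.) derivative of a product of Lipschitz functions, and (b) the behaviour of $\Xi^{\pm}_{r,i}$ at parameter values where the two competing eigenvalue branches cross, where one must argue that, off a null set, differentiability of the minimum pins the derivative down unambiguously. The remaining manipulations are routine.
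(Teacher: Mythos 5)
Your proposal is correct and follows essentially the same route as the paper: the authors likewise combine the first variation of the volume, $\frac{d}{dt}|\Sigma|_{g_t}=\frac12\int_\Sigma\langle g_t,h_t\rangle_t\,d\mu_t$, with the eigenvalue variation formula of Lemma~\ref{lemma:alt derivative} via the product rule, and then handle $\Xi^{\pm}_{r,i}$ by noting that away from a null set the derivative of the minimum is that of the smaller branch (with the crossing locus treated separately). Your extra care about the a.e.\ Leibniz rule for Lipschitz functions and about what differentiability of the minimum forces on the crossing set is a slight refinement of, but not a departure from, the paper's argument.
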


\begin{proof}
Recall that 
\begin{align*}
\dfrac{d}{dt}\left(\vert \Sigma\vert_{g_t} \right)=\dfrac{1}{2}\int_{\Sigma}\langle g_t,h_t\rangle_t d\mu_t.
\end{align*}
 Combining this with the formula \eqref{eqn: alt deri quadratic}, a direct computation gives the formula of the weak derivatives of the normalized Laplace eigenvalues \eqref{eqn:normalized eigenvalue derivative}.

For $i\geqslant 1$ given, the two normalized Laplace eigenvalues in the formula of $\Xi^+_{r,i}(g_t)$ (also of $\Xi^-_{r,i}$) are both differentiable almost everywhere. Fix $t_0$ such that the two normalized eigenvalues are both differentiable. The derivative of $\Xi^+_{r,i}(g_t)$ at $t=t_0$ is then given by the derivative of 
$Q_{t_0,0}(-\tan(r))(u^{t_0}_0)$ or $Q_{t,i}(\cot(r))(u_1^{t_0})$, depending on whether $\lambda_0(g_{t_0},-\tan r)$ or $\lambda_i(g_{t_0},\cot r)$ is smaller, respectively. For the case these two eigenvalues are equal, we have
\begin{align*}
\dfrac{d}{dt}\biggr\vert_{t=t_0}\Xi^+_{r,i}(g_t)=\min\lbrace Q_{t_0,0}(-\tan(r))(u^{t_0}_0), Q_{t,i}(\cot(r))(u_1^{t_0})\rbrace,
\end{align*}
where $u_0^{t_0}\in E_0(g_{t_0},-\tan r)$ and $u_1^{t_0}=E_i(g_{t_0}, \cot r)$.
A similar statement also holds for the functional $\Xi^-_{r,i}(g_t)$.
\end{proof}
With the result above, we shall be able to complete the proof of Theorem \ref{4.4}.

\begin{proof}[Proof of Theorem \ref{4.4}]
We first prove the first half of this theorem. For $u\in E_i(g,\sigma)$, define the mapping
 \begin{align*}
 T(u,\sigma):=\lbrace(\tau^k(u),F(u,\sigma))\in S^2(\Sigma)\times S^2(\partial\Sigma)  \rbrace,
 \end{align*}
 where 
 \begin{align*}
 &\tau^k(u):= A\left(du\otimes du +\dfrac{1}{4}\Delta_g(u^2)g\right)- \dfrac{\lambda_i}{k}g,\\
 & F(u,\sigma):=\dfrac{A\sigma}{2}u^2 g^b.
 \end{align*}
 Here $g$ is a critical metric of $\Xi^+_{r,i}$ by assumption. We use Lemma \ref{lemma:normalized derivative} and argue as in the Robin eigenvalue case in Section \ref{sec:2.1}. We can show for any  $$(h_1,h_2)\in L^2(S^2(\Sigma),g)\times L^2(S^2(\partial\Sigma),g), $$ there exist $L^2(\Sigma,g)$ normalized function $u,v\in E_0(g,-\tan r)\cup E_i(g,\cot r)$ such that
\begin{align}
\label{eqn:side derivative diff sign}
\langle T(u),(h_1,h_2)\rangle \langle T(v),(h_1,h_2)\rangle \leqslant 0.
\end{align}

Let $\widehat{C}$ be the convex hull generated by $T(u_0,-\tan r)$ and the following set
\begin{align*}
K_i=\lbrace T(u,\cot r):u\in E_i(g,\cot r), \Vert u\Vert_{L^2(g)}=1 \rbrace.
\end{align*}
Note the convex hull $\widehat{C}$ is compact.
 
Then we must have $(0,0)\in \widehat{C}$. Otherwise, as $\hat{C}$ is compact, there exists some $$(h_1,h_2)\in L^2(S^2(\Sigma),g)\times L^2(S^2(\partial\Sigma),g) $$ such that
\begin{align*}
\inf_{(\tau,F)\in\widehat{C}}\left\langle (\tau,F),(h_1,h_2)\right\rangle>\epsilon>0.
\end{align*}
This and \eqref{eqn:side derivative diff sign} lead to a contradiction. 

When $g$ is a critical metric of $\Xi^-_{r,i}$, an analogous argument shows the convex hull generated by $T(u_0,\tanh r)$ and the set
\begin{align*}
\lbrace T(u,\coth r): u\in E_i(g,\coth r), \Vert u\Vert_{L^2(g)}=1   \rbrace
\end{align*}
also contains the point $(0,0)$.

Then there exists non-negative constants $\lbrace t_{\alpha}\rbrace_{\alpha=0}^m$ with $\displaystyle\sum_{\alpha=0}^m t_{\alpha}=1$ and $L^2$-normalized $u_0\in E_0(g,\sigma_0)$ and   $\lbrace u_j \rbrace_{j=1}^m$ in $E_i(g,\sigma_1)$ such that
\begin{equation}
\label{eqn:alt sphere functional zero}
\left\{ \begin{aligned} 
& \sum_{\alpha=0}^m t_{\alpha}\tau^k(u_{\alpha})=0\ \mathrm{on}\ \Sigma,\\
& \sum_{\alpha=0}^m t_{\alpha} F(u_{\alpha})=0 \ \mathrm{on}\ \partial\Sigma,
\end{aligned} \right.
\end{equation} 
where $\sigma_0=-\tan r$ and $\sigma_1=\cot r$ for $\Xi^+_{r,i}$, and $\sigma_0=\tanh r$ and $\sigma_1=\coth r$ for $\Xi^-_{r,i}$.
The second line of \eqref{eqn:alt sphere functional zero} never holds for $\Xi^-_{r,i}$, since $\coth r$ and $\tanh r$ are both strictly positive when $r>0$. Thus, \eqref{eqn:alt hyperbolic functional} has no critical point. The functional \eqref{eqn:alt sphere functional} can only have critical points when the equality \eqref{eqn:laplacian eigenvalue equal} holds. If this is not the case, the derivative of  $\Xi^+_{r,i}(\Sigma,g_t)$ near $t=0$ is induced by $\lambda_0(g_t,-\tan r)$ if $$\lambda_0(g_0,-\tan r)<\lambda_i(g_0,\cot r)$$ and by $\lambda_i(g_t,\cot r)$ if $$\lambda_0(g_0,-\tan r)>\lambda_i(g_0,\cot r).$$
Note $\lambda_0$ is always simple and always differentiable. By a similar argument on the convex hull generated by $K_i$ instead of $\hat{C}$, one would get $(0,0)\in K_i$ which is clearly impossible.

Denote this common Laplace eigenvalue by $\lambda$ from now.  The divergence theorem applied with $\sigma=-\tan r<0$ shows $\lambda>0$. From above, we can assume now $0<t_{\alpha}<1$ for all $0\leqslant \alpha\leqslant m$. Define $v_{\alpha}=\sqrt{t_{\alpha}}u_{\alpha}$. The first line of \eqref{eqn:alt sphere functional zero} yields
\begin{align}
\label{eqn:alt sphere interior}
A \sum_{\alpha=0}^m \left[   dv_{\alpha}\otimes dv_{\alpha}-\dfrac{1}{2}(\vert \nabla v_{\alpha}\vert_g^2-\lambda v_{\alpha}^2 )g  \right]-\dfrac{\lambda}{k}g=0.
\end{align}
Taking trace of the equation above, we obtain
\begin{align}
\label{eqn:alt sphere interior trace}
A\sum_{\alpha=0}^m\vert \nabla v_{\alpha}\vert_g^2\left(1-\dfrac{k}{2} \right)=\dfrac{-k\lambda}{2}\left(A\sum_{\alpha=0}^m v_{\alpha}^2-\dfrac{2}{k}\right)
\end{align}
Let us first assume $k>2$, using the same trick as before, define 
\begin{align*}
f=\dfrac{4\lambda}{k-2}\sum_{\alpha=0}^m v_{\alpha}^2-\dfrac{4\lambda}{A(k-2)}.
\end{align*}
Then $f$ is a Laplace eigenfunction with negative eigenvalue $\dfrac{4\lambda}{2-k}$. Moreover, the second line of \eqref{eqn:alt sphere functional zero} gives
\begin{align}
\label{eqn:alt sphere boundary}
(-\tan (r)) v_0^2 +\cot r\sum_{j=0}^m v_j^2\equiv 0,\ \mathrm{on}\ \partial\Sigma.
\end{align}
Thus we have $\dfrac{\partial f}{\partial\eta}\equiv 0$ on $\partial\Sigma$. Hence on $\Sigma$, we see
\begin{align}
\label{eqn:alt sphere embedding}
\sum_{\alpha} v_{\alpha}^2-\dfrac{1}{A}\equiv 0.
\end{align}
For the special case $k=2$, \eqref{eqn:alt sphere interior trace} immediately gives \eqref{eqn:alt sphere embedding}.\\

Combining \eqref{eqn:alt sphere embedding} with \eqref{eqn:alt sphere boundary}, we find
\begin{align}
\label{eqn:alt sphere boundary ratio}
v_0=\dfrac{1}{\sqrt{A}}\cos r,\ \mathrm{and}\ \sum_{j=0}^m v_j^2=\dfrac{1}{A}\sin^2 r\ \mathrm{on}\ \partial\Sigma.
\end{align}
Substitute \eqref{eqn:alt sphere embedding} and \eqref{eqn:alt sphere boundary ratio} back to \eqref{eqn:alt sphere interior} we get
\begin{align*}
1-\dfrac{\lambda}{k}g(\eta,\eta)=0
\end{align*}
Since $\eta$ is a unit norm vector field for $g$, we necessarily obtain $\lambda=k$.
This shows the map $\sqrt{A}(v_0,v_1,\ldots,v_m)$ is an isometric immersion from $(\Sigma,g)$ into $\mathbb{S}^m_+$. It follows analogously as the previous case that the immersion is actually an FBMI into $\mathbb B^m(r)$.
\end{proof}

\bibliographystyle{plain} 
\bibliography{FBMIbib} 
\end{document}